\pgfplotsset{compat=1.10}
\newcommand{\bigslant}[2]{{\left. \raisebox{.3em}{$#1$}\middle/\raisebox{-.3em}{$#2$}\right.}}
\theoremstyle{plain}
\newtheorem{thm}{Theorem}[section]
\newtheorem{theorem}[thm]{Theorem}
\newtheorem{lemma}[thm]{Lemma}
\newtheorem{proposition}[thm]{Proposition}
\newtheorem{construction}[thm]{Construction}
\theoremstyle{definition}
\newtheorem{definition}[thm]{Definition}
\newtheorem{remark}[thm]{Remark}
\newtheorem{example}[thm]{Example}
\newtheorem{thevarthm}[thm]{\varthmname}
\newenvironment{varthm*}[1]{\trivlist\item[]{\bf #1.}\it}{\endtrivlist}
\def\keywordname{{\bfseries Keywords}}%
\def\keywords#1{\par\addvspace\medskipamount{\rightskip=0pt plus1cm
		\def\and{\ifhmode\unskip\nobreak\fi\ $\cdot$
		}\noindent\keywordname\enspace\ignorespaces#1\par}}
\def\subclassname{{\bfseries Mathematics Subject Classification
		(2010)}\enspace}
\def\subclass#1{\par\addvspace\medskipamount{\rightskip=0pt plus1cm
		\def\and{\ifhmode\unskip\nobreak\fi\ $\cdot$
		}\noindent\subclassname\ignorespaces#1\par}}
\def\P{\mathbb{P}}
\def\A{\mathcal{A}}
\def\C{\mathbb{C}}
\def\F{\mathbb{F}}
\def\al{\alpha}
\def\be{\beta}
\def\ga{\gamma}
\def\O{\mathcal{O}}
\DeclareMathOperator{\Der}{Der}
\DeclareMathOperator{\Hom}{Hom}
\DeclareMathOperator{\D}{D}
\DeclareMathOperator{\syz}{syz}
\DeclareMathOperator{\sym}{Sym}
\title{On unexpected curves of type $(d+k,d)$.
}
\author{Grzegorz Malara, Halszka Tutaj-Gasi\'nska}
\date{\today}
\begin{document}
	\maketitle
	\begin{abstract}
		We present a construction  explaining the existence of (unexpected) curves of degree $d+k$, passing through a set $Z$ of points on $\P^2$, and having a generic point $P$ of multiplicity $d$.  The construction is based on  the syzygies of the $k$-th powers of 
		Jacobian of the product of lines dual to the points of $Z$. We prove also a result characterizing the unexpectedness of the curves via splitting type of the bundle of these syzygies retricted to the line dual to
		$P$,  providing a kind of generalization of the theory started by Faenzi and Valles and by Cook II, Harbourne, Migliore and Nagel.
		
		\keywords{unexpected curves, syzygies, jacobian}
		\subclass{14N20, 14C20}
		
	\end{abstract}
\maketitle
	
\section{Introduction}
	Studying the dimension of a given linear system of divisors is one of the most classical problems in algebraic geometry. Typical examples of interesting linear systems arise when imposing vanishing conditions on divisors in  linear systems. Determining the dimension of such a system amounts to deciding if the imposed vanishing conditions are independent or not. If the underlying variety is a projective space and vanishing (to order one) is imposed in {\it general  points} then the  resulting system is either empty, or its dimension is determined by the number of points. Imposing vanishing to order two in general points of a projective space is also well understood due to a highly non-trivial result of Alexander and Hirschowitz, \cite{AH}. For points of higher multiplicity even a conjectural picture is, in general, missing. However, in the case of the complex projective plane the so-called SHGH conjecture \cite{CM},  provides a conjectural picture. Despite intensive investigations over last 40 years, the SHGH conjecture remains widely open. 
	Basing on the results of Faenzi and Valles \cite{FV, V} in 2016 Cook II, Harbourne, Migliore and Nagel in their paper \cite{CHMN}, started a new direction of research.
	They considered curves of degree $d+1$ passing through 
	a set $Z \subset \P^2$ of non-general points and  having multiplicity $d$ in a general point $P$. It is crucial here that the point $P$ is general. If the existence of such curves does not follow from the na\"ive dimension count, then the authors of \cite{CHMN} call them unexpected (of type $(d+1,d)$). Their work was motivated by findings for $d=3$ in \cite{DIV} by DiGennaro, Ilardi and Vall\`es.  Unexpected curves, and more generally, unexpected hypersurfaces, attracted a lot of attention, see eg. \cite{BMSS}, \cite{DMO}, \cite{Dimca}, \cite{DFHMST}, \cite{FGHM}, \cite{HMNT}, \cite{HMT-G}. 
	In particular, the authors of \cite{KS} provide 
	an infinite family of unexpected curves of type $(2m+1,3)$.  
	
	The present work was motivated by an attempt  to explain the existence of this family.  Then we wanted to create a more general theory, along the lines of \cite{CHMN}, explaining the existence of unexpected curves with the difference between the degree and the multiplicity in the general point greater than 1.  In \cite{CHMN} the existence of unexpected cures of type $(d+1, d)$ is explained via degree $d$ syzygies of the Jacobian ideal of an arrangement $\A_Z$ of lines dual to the points of the set $Z$. The purpose of our work is to investigate unexpected curves of type $(d+k, d)$ from this point of view. More specifically, in Theorem \ref{thm:main} we describe a construction of curves of type $(d+k,d)$ based on syzygies of the $k$-th power of a suitable twist of the Jacobian ideal of the arrangement $\A_Z$. This construction generalizes the one of Cook II, Harbourne, Migliore, Nagel from \cite{CHMN}, where they dealt with $k=1$ case. Proposition \ref{pro:unexp} provides, in turn, a numerical criterion for the curves resulting from Theorem \ref{thm:main} to be unexpected. Section \ref{sec:derivations} and Lemma \ref{lem:generaliz 3.3} are the technical heart of the work.  In Section \ref{sec:derivations} we show that  the syzygy bundle of $J^k/f$ may be treated as a  subbundle of the $k$-th symmetric powers of the tangent bundle, just generalizing, in a sense, the notion of the logarithmic derivations.
In \ref{lem:generaliz 3.3} we prove the connection between the splitting type of the above bundle and the dimension of $I_Z+jP$ in degree $j+k.$
	
	In the last section of our paper  we present examples of  (various types of) unexpected curves,
	some of them may be a starting point for new problems.

\section{Basic facts and the syzygies construction }\label{sec:construction}
	
	In this section we establish the notation and recall basic facts and definitions needed in the sequel.
	
	Let $S=\C[x,y,z]$  
	be the graded ring of polynomials over
	field $\C$. Let $\check{\P}^2$  be the  space dual to $\P^2$ and denote by $Z$ a collection of simple points in $\check{\P}^2$. We associate with the set $Z\subset \check{\P}^2$ the line arrangement $\A_Z=\A=\{H_1,\ldots,H_{|Z|}\}$, where $H_i's$ are lines dual to the points of $Z$.  We   denote by $\ell_i$ the linear forms defining $H_i$. In this paper we deal with line arrangements, therefore in the sequel $f$ always stands for $f=\ell_1\cdot \ldots \cdot \ell_{|Z|}$, which we call the defining polynomial of $\A$. 
	For a given point $Q=(a,b,c)$ in $\P^2$, we use $L_Q$ to denote the line in $\check{\P}^2$ dual to this point. Similarly, if $L \subset \P^2$ is a generic line with the equation $\al x+\be y+\ga z=0$, then $P_{L}=(\al, \be, \ga)$ indicates the dual point of $L$.
	
	Let $\Der(S)$ be the module of $\C$-derivations of $S$ and denote by $\theta_E=x\frac{\partial}{\partial x}+y\frac{\partial}{\partial y}+z\frac{\partial}{\partial z} \in \Der(S)$ the Euler derivation. For a given homogeneous element $f$ we define 
	$$\D(f) = \{\theta \in \Der(S) \; | \; \theta(f) \in fS \}.$$
	It is known that $\D(f)=S\theta_E \oplus \D_0(f), $ where $\D_0(f)$ is the kernel of the map $\partial \mapsto \partial(f)$ and is called the derivation module, giving rise to the derivation bundle, see \cite{CHMN}. We use also the notation $\D(\A)$ if $f$ is the defining polynomial of an arrangement $\A$.
	
	In the same spirit, we define  $\Der^k(S)$ to be the module 
	$$\Der^k(S)=\Big\{\theta \; | \; \theta=h_{k,0,0}\Big(\frac{\partial}{\partial x}\Big)^k+h_{k-1,1,0}\Big(\frac{\partial}{\partial x}\Big)^{k-1}\frac{\partial}{\partial y}+\ldots+h_{0,0,k}\Big(\frac{\partial}{\partial z}\Big)^k \Big\} \cong S^{\binom{k+2}{2}},$$
	where $h_{i_1,i_2,i_3}\in S$. Note, that here  $\Big(\frac{\partial}{\partial x}\Big)^{i_1}\Big(\frac{\partial}{\partial y}\Big)^{i_2}\Big(\frac{\partial}{\partial z}\Big)^{i_3}$ is a product of partial derivatives of first order, not a derivative of higher order.  
	
	In what follows $J=( f_x,f_y,f_z)$ always denotes the jacobian ideal of $f$.
	
	Let us now recall   the definition of unexpected curves.
	Let  $I_Z$ be the saturated homogeneous ideal of a finite set $Z$ of pairwise different points  in $\P^2$ and let $P$ be a generic point in $\P^2$.
	Given a homogeneous ideal $I\subseteq S$ we denote by $[I]_t$ the vector space 
	spanned by all forms in $I$ of degree $t$. Let $k$ be a positive integer.
	
	\begin{definition}
	
		We say that a curve $C$ given as a zero set of a form 
		in $[I_Z]_{d+k}$, having a point of multiplicity $d$ at $P$, is unexpected  of type $(d+k,d)$ if 
		
		$$\dim [I_{P\cup Z}]_{d+k} > \max\left(0, \dim [I_Z]_{d+k} - \binom{d+1}{2}\right);$$
		i.e., $C$ is unexpected if vanishing $d$ times in $P$ imposes on $[I_Z]_{d+k}$ fewer than the expected
		number of conditions. Moreover, we assume (as some authors do and some do not) that vanishing in $Z$ imposes independent conditions on the forms of degree $d+k$. 
		
	\end{definition}
	
	This definition may be further generalized to the case of unexpected hypersurfaces, vanishing  (unexpectedly) along some linear subspaces, see e.g. \cite{HMT-G}.
	
	As we mentioned above, the existence of  unexpected curves of type $(d+1,d)$ is explained in \cite{CHMN} by treating the curve as an image of a certain map from a line in $\P^2$, where the map is defined  with the help of  the syzygies of the jacobian ideal of $f$.

	This construction is the starting point for our paper. We describe it from a slightly changed point of view.
	
	\begin{construction}\label{prop:chmn5.10}		{\rm \cite{CHMN}
			\begin{itemize}
				\item Take a set $Z$ of pairwise different points   in a projective plane. Treat this plane as a dual projective plane $\check{\P}^2$.
				
				\item The points of $Z$ give dual lines in $\P^2$
				with the equations given by the forms $l_1,\dots,l_{|Z|}$. Let $f=\ell_1\cdots \ell_{|Z|}$. 

				\item Take a generic line $L \in \P^2$, with its dual point $P_{L}=(\al, \be, \ga)$. Take a point $Q=(a,b,c)$ in $L$.
				
				\item   Let $J$ denote the jacobian ideal of $f$ with fixed generators. Take a syzygy of $J+(L)$ (of minimal degree), say $(s_0,s_1,s_2,s_3)$. Write $s(Q)=(s_0(Q),s_1(Q),s_2(Q))$, for $Q\in L$, where $s_0,s_1,s_2$ are of degree $d$.
				
				\item  Then take two lines in the dual plane: $\ell_Q$ (dual to $Q$) and $\ell_{s(Q)}$, dual to $s(Q)$. The lines (in general) intersect in a point $P$, so we have a map: $L \ni Q\to P\in \check{\P}^2$.
				
				\item The map is not defined if $\ell_{s(Q)}=\ell_Q$. It may happen only when $Q$ is a point of intersection of $L$ and $f$.
				
				\item In \cite{CHMN} it is proved that when a point $Q$ moves along the generic line $L$, the image $P$, moves along a curve $C$ of degree $d +1$.
				
				\item The curve $C$ constructed  this way passes through all such points of $Z$ that the map is defined there, and has a point of multiplicity $d$ in $P_{L}=\check{L}=(\alpha,\beta,\gamma)$.
		
		\end{itemize}}
	\end{construction}

\section{Syzygies-based construction }\label{sec:construction-generalized}
	
	The following theorem and its proof describe a construction, generalizing the one  from Construction \ref{prop:chmn5.10}. By means of this generalized construction we will get curves of type $(d+k,d)$, for $k\geq 1$. 
	
	\begin{theorem}
		\label{thm:main}
		Let $Z$ be a set of $|Z|$ points in $\check{\P}^2$ and let $L$ be a generic line on $\mathbb{P}^2$ with the equation $\al a+\be b+\ga c=0$. Denote by $(g_{k,0,0},\dots,g_{0,0,k}, g)$ a (reduced) syzygy of $J^k+(L)$ where $g_{i_1,i_2,i_3}$ are all of degree $d$, i.e., for any $Q=(a:b:c)\in \P^2$ we have
		$$g_{k,0,0}(Q)f_x(Q)^k+g_{k-1,1,0}(Q)f_x(Q)^{k-1}f_y(Q)+\dots+g_{0,0,k}(Q)f_z(Q)^k+g(Q)L(Q)=0.$$
		Let $S_Q$ be the curve of degree $k$ in $\check{\P}^2$ given by the equation
		$$S_Q(x,y,z):=g_{k,0,0}(Q)x^k+g_{k-1,1,0}(Q)x^{k-1}y+\dots+g_{0,0,k}(Q)z^k=0.$$

		Let $Q=(a,b,c)\in L$.
		Consider in $\check{\P}^2$ the system of equations
		\begin{equation}\label{s}\tag{$\star$} 
			\begin{cases} 
				\al a+\be b+\ga c=0\\
				ax+by+cz=0\\
				g_{k,0,0}(a,b,c)x^k+g_{k-1,1,0}(a,b,c)x^{k-1}y+\dots+g_{0,0,k}(a,b,c)z^k=0.
			\end{cases}
		\end{equation}
		We will say that this system is \emph{not determined} in $Q=(a,b,c)\in L$ if for all $(x,y,z)$ we have
		$$ g_{k,0,0}(a,b,c)x^k+g_{k-1,1,0}(a,b,c)x^{k-1}y+\dots+g_{0,0,k}(a,b,c)z^k = (ax+by+cz)^k.$$
		
		Let $P_L=\check{L}=(\al,\be,\ga)$.
		
		Then:
		\begin{enumerate}
			\item The  system $(\star)$ is not determined,  only for points $Q$ on $\A \cap L$.
			
			\item The solutions $(x,y,z)$ to the system $(\star)$
			lie on a curve $C_L\subset \check{P}^2$ of degree (at most) $d+k$.

			\item $C_L$ passes through   $Z$. 

			\item $C_L$ has a point of multiplicity at least $d$ in  $P_L$.
			
			\item The curve $C_L$ may be treated as  $C_L(x,y,z)$ with parameters $(\al,\be,\ga)$ and "dually" as  $C_L(\al,\be,\ga)$ with parameters $(x,y,z)$. The partial derivatives computed in point $(\al,\be,\ga)$ with respect to $(x,y,z)$ and computed in point $(x,y,z)$ with respect to $(\al, \be, \ga)$ are the same up to order $d$.
		\end{enumerate}
	\end{theorem}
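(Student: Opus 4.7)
The plan is to derive a single closed-form polynomial for $C_L$: solving the first two (linear) equations of $(\star)$ for $Q=(a,b,c)$ gives $Q=(\alpha,\beta,\gamma)\times(x,y,z)$, and substitution into the third equation yields
\[
F_L(x,y,z;\alpha,\beta,\gamma)\;:=\;\sum_{i_1+i_2+i_3=k}g_{i_1,i_2,i_3}\bigl((\alpha,\beta,\gamma)\times(x,y,z)\bigr)\,x^{i_1}y^{i_2}z^{i_3},
\]
which is bi-homogeneous of degrees $(d+k,d)$ in $\bigl((x,y,z),(\alpha,\beta,\gamma)\bigr)$. All five claims will be read off from this formula together with the syzygy identity and Euler's relation $af_x+bf_y+cf_z=\deg(f)\,f$.

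For (1), evaluating the syzygy at a point $Q\in L$ (so $L(Q)=0$) gives $S_Q(f_x(Q),f_y(Q),f_z(Q))=0$. If the system is not determined, then by definition $S_Q(x,y,z)=(ax+by+cz)^k$ identically, so substituting $(x,y,z)=\nabla f(Q)$ and applying Euler yields $(\deg(f)\,f(Q))^k=0$, forcing $f(Q)=0$, i.e.\ $Q\in\mathcal{A}\cap L$. Item (2) is immediate from the bidegree of $F_L$. For (3), given $P_i\in Z$, set $Q_0:=L\cap\ell_i=(\alpha,\beta,\gamma)\times P_i$. For $L$ generic, $\ell_i$ is the only line of $\mathcal{A}$ through $Q_0$, so the product rule applied to $f=\prod_j\ell_j$ yields $\nabla f(Q_0)=\lambda P_i$ with $\lambda=\prod_{j\ne i}\ell_j(Q_0)\ne0$. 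The syzygy at $Q_0$ then gives $0=S_{Q_0}(\nabla f(Q_0))=\lambda^kS_{Q_0}(P_i)=\lambda^kF_L(P_i)$, hence $F_L(P_i)=0$.

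For (4), I would introduce the incidence variety $W=\{(Q,R)\in L\times\check{\P}^2:R\in\ell_Q\cap S_Q\}$. The second projection $W\to C_L$ is birational, since for generic $R\in C_L$ the unique $Q\in L$ with $R\in\ell_Q$ is again $Q=(\alpha,\beta,\gamma)\times R$. Over $R=P_L$ the condition $P_L\in\ell_Q$ is automatic (because $Q\in L$ is equivalent to $\ell_Q(P_L)=0$ by duality), so the fiber consists of the $Q\in L$ with $S_Q(P_L)=0$; this is a degree $d$ polynomial condition on $Q$, yielding $d$ roots in $L\cong\P^1$ and hence $d$ local branches of $C_L$ at $P_L$, so $\operatorname{mult}_{P_L}(C_L)\ge d$.

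The main obstacle will be item (5). With $u=(x,y,z)$ and $v=(\alpha,\beta,\gamma)$, the cross product $v\times u$ vanishes identically on the diagonal $u=v$, and each $g_{i_1,i_2,i_3}$ is homogeneous of degree $d$, so all its partials of order $<d$ vanish at the origin. By the chain rule applied to $g_{i_1,i_2,i_3}(v\times u)$, both mixed jets---the partials of $F_L$ in the $x,y,z$-variables evaluated at $(x,y,z)=(\alpha,\beta,\gamma)$, and those in the $\alpha,\beta,\gamma$-variables evaluated at $(\alpha,\beta,\gamma)=(x,y,z)$---vanish up to order $d-1$. At order $d$ the two Taylor jets agree up to the sign $(-1)^d$ coming from the antisymmetry $v\times u=-u\times v$. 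Carefully matching these expressions and verifying the claimed agreement up to order $d$ is where the bookkeeping is most delicate.
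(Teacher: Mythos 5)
Your proposal is correct and follows essentially the same route as the paper's proof: your closed form $F_L(u;v)=\sum_{|I|=k} g_I(v\times u)\,u^I$ with $u=(x,y,z)$, $v=(\alpha,\beta,\gamma)$ is precisely the paper's substitution $a=\frac{\gamma y-\beta z}{\beta x-\alpha y}$, $b=\frac{\alpha z-\gamma x}{\beta x-\alpha y}$ written projectively; items (1) and (3) use the identical Euler/gradient arguments, and your fiber count over $P_L$ in (4) imposes the same degree-$d$ condition $S_Q(P_L)=0$ on $Q\in L$ that the paper phrases as intersecting the image curve $Q\mapsto (g_I(Q))_I$ with the hyperplane $H_L$. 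The one place you go beyond the paper is item (5), which the paper merely asserts: your observation that all derivatives of order $<d$ vanish on the diagonal (since $g_I(v\times u)=g_I(v\times(u-v))$ is homogeneous of degree $d$ in $u-v$) and that the order-$d$ jets agree only up to the factor $(-1)^d$ coming from $v\times u=-(u\times v)$ is correct, and it shows that the theorem's phrase ``the same up to order $d$'' must be read either as ``up to order $d-1$'' or as agreement of the order-$d$ derivatives up to a global sign --- harmless projectively, but not a literal equality when $d$ is odd.
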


	\begin{proof}
		\begin{itemize}
			\item[]
			\item [Ad 1)]
			If the system $(\star)$ is not determined in a point $Q=(a,b,c)$  on the line $L$, then in particular, for $(x,y,z)=(f_x(Q),f_y(Q),f_z(Q))$ we have
			\begin{align*}
				0=g_{k,0,0}(Q)f_x(Q)^k+g_{k-1,1,0}(Q)f_x(Q)^{k-1}f_y(Q)+\dots+g_{0,0,k}(Q)f_z(Q)^k=\\
				=(af_x(Q)+bf_y(Q)+cf_z(Q))^k.
			\end{align*}
			From Euler's equality $af_x(Q)+bf_y(Q)+cf_z(Q)=\deg(f)f(Q)$, so $f(Q)=0.$

			\item [Ad 2)] Here we show that the curve $C_L$ passes through all such points of $Z$  where $(\star)$  is determined. Indeed, $C_L$ passes through $z_j\in Z$ if 
			$z_j\in L_Q$ and $z_j\in S_Q$ for a point $Q\in L$.
			
			Let $z_j=(z_{j0},z_{j1},z_{j2})$. 
			
			Observe that $z_j\in L_Q $  if $Q\in \ell_j$, so $Q$ must be the intersection point of $\ell_j$ and $L$. 
			For such an intersection point $Q$ we have, from  the fact that $g_{i_1,i_2,i_3}$, are syzygies of $J^k$:
			
			$$g_{k,0,0}(Q)f_x(Q)^k+g_{k-1,1,0}(Q)f_x(Q)^{k-1}f_y(Q)+\dots+g_{0,0,k}(Q)f_z(Q)^k=0$$
			
			As $f=\ell_1\cdots \ell_j\cdots\ell_{|Z|}$, we have
			$$(f_x(Q),f_y(Q),f_z(Q))=\nabla_Q f=h(Q)\nabla_Q\ell_j=h(Q)(z_{j0},z_{j1},z_{j2}),$$
			with a suitable polynomial $h$ satisfying $h(Q)\neq 0$.
			
			Thus 
			$$g_{k,0,0}(Q)z_{j0}^k+g_{k-1,1,0}(Q)z_{j0}^{k-1}z_{j1}+\dots+g_{0,0,k}(Q)z_{j2}^k=0.$$
			
			This gives that $z_j\in S_Q$, so $C_L$ passes through $z_j$.

			\item [Ad 3)] If $(\star)$ is not determined at $Q_i=L\cap l_{z_i}$, then the solutions of this system consist of  the whole line $L_{Q_i}$. Of course, $z_i\in L_{Q_i}$ (as $Q_i\in l_{z_i}$).
			
			From $2)$ and $3)$ we see that the solutions of $(\star)$ pass through all $z\in Z$. 
			
			\item [Ad 4)] The curve $C_L$ has a point of multiplicity $d$ in $P_L=\check{L}=(\al,\be,\ga)$. Indeed,  observe that as the point $P_L$ belongs   to
			every $L_Q,$ where $Q\in L$. We need only to check that $P_L$ belongs to $d$ of curves $S_Q$.
			
			Let  $H_L$ be a hyperplane in $\P^r, r=\binom{k+2}{2}-1,$ defined as
			$$H_L=\{(x_0:\dots:x_r) : \al^kx_0+\al^{k-1}\be x_1+\dots+\ga^kx_r=0\}.$$ 
			
			Then $\{G(Q):=(g_{k,0,0}(Q),\dots,g_{0,0,k}(Q)), Q\in L\}$ is a curve of degree  $d$ in $\P^r$. It cuts the hyperplane $H_L$ in (at least) $d$ points, say $Q_1,\dots, Q_d$. From the genericity of $L$, the points are different.

			For any such point $Q_i$, with $i=1,\dots d$, we have
			
			$$g_{k,0,0}(Q_i)\al^k+ g_{k-1,1,0}(Q_i)\al^{k-1}\be+\dots+g_{0,0,k}(Q_i)\ga^k=0,$$
			so $P_L\in S_{Q_i}$. 

			\item [Ad 5)] Here we describe a very explicit construction of the curve $C_L$. This part of the proof proves $5$ and is  an alternative proof of $2$ and $4$. 

			For $(a,b,c)\in L$, where $L=\al a+\be b+\ga c=0$ is a generic line on $\P^2$, the solutions of the two equations:
			
			$$ax+by+cz=0$$
			and
			$$g_{k,0,0}(a,b,c)x^k+g_{k-1,1,0}(a,b,c)x^{k-1}y+\dots+g_{0,0,k}(a,b,c)z^k=0$$
			lie on a curve of degree $d+k$.

			Indeed, take a point $(a,b,c)\in L$. As the syzygy is reduced, we may  assume without loss of generality, that $a^d$ appears in at least one $g_{i_1,i_2,i_3}$. We may also assume  that $c=1$, and that $\al , \be\neq 0,$  so
			$$b=\frac{-\al a-\ga }{\be}.$$
			Thus
			$$g_{k,0,0}(a,b,c)x^k+g_{k-1,1,0}(a,b,c)x^{k-1}y+\dots+g_{0,0,k}(a,b,c)z^k=0$$
			is equivalent to
			$$g_{k,0,0}(a,\frac{-\al a-\ga }{\be},1)x^k+g_{k-1,1,0}(a,\frac{-\al a-\ga }{\be},1)x^{k-1}y+\dots+g_{0,0,k}(a,\frac{-\al a-\ga }{\be},1)z^k=0.$$
			As $g_i$ are of degree $d$ we may reorder  the last equation to get:
			\begin{equation}\label{ag}
				a^dh_1(x,y,z)+a^{d-1}h_2(x,y,z)+\dots+h_d(x,y,z)=0
			\end{equation}
			with some homogeneous polynomials $h_j(x,y,z)$ of degree $k$ (and in general depending also on $\al, \be, \ga$).
			
			On the other hand, putting $b=\frac{-\al a-\ga }{\be}$ and $c=1$ into the first, linear equality, we get
			$$\be ax+(-\al a-\ga )y+\be z=0.$$
			Thus for all points $(x,y,z)$ except $(\al, \be, \ga)$ we have
			$$a=\frac{\ga y-\be z}{\be x-\al y}.$$

			Put this $a$ into the equation (\ref{ag}) and multiply by $(\be x-\al y)^d$.
			We get:
			
			$$ (\ga y-\be z)^dh_1(x,y,z)+(\ga y-\be z)^{d-1}(\be x-\al y)h_2(x,y,z)+\dots+h_d(x,y,z)(\be x-\al y)^d=0.$$
			
			What we obtain is a curve of degree  $d+k$,
			with a $d$-fold point at $(\al, \be,\ga)=\check{L}$.
			
			Moreover, 
			the partial derivatives computed with respect to
			$(x,y,z)$ in  the point $(x,y,z):=\check{L}$ and computed with respect to $(\al, \be, \ga)$ in  the point $(\al, \be, \ga):=\check{L}$  are the same up to order $d$.
	
		\end{itemize}
	\end{proof}
	\begin{remark}{\rm 
			
			We also compute explicitly, what we know already from the proof of the assertion $3)$ in Theorem \ref{thm:main}, that if the system $(\star)$ is not determined in a point $Q_1$ on $L$, then the line $L_{Q_1}$ is a component of $C_L$.
			Indeed, in the above construction, we take the curve
			$$g_{k,0,0}(a,b,c)x^k+g_{k-1,1,0}(a,b,c)x^{k-1}y+\dots+g_{0,0,k}(a,b,c)z^k=0,$$
			replace $a,b$ by $\frac{\ga y-\be z}{\be x-\al y}$ and $\frac{\al z-\ga x}{\be x-\al y}$ and multiply by $(\be x-\al y)^d$. We get

			\begin{equation*}\label{sl}
				g_{k,0,0}(\frac{\ga y-\be z}{\be x-\al y},\frac{\al z-\ga x}{\be x-\al y},1)x^k+g_{k-1,1,0}(\frac{\ga y-\be z}{\be x-\al y},\frac{\al z-\ga x}{\be x-\al y},1)x^{k-1}y+\dots\\  \end{equation*}
			\begin{equation}\tag{$*$}
				\dots +g_{0,0,k}(\frac{\ga y-\be z}{\be x-\al y},\frac{\al z-\ga x}{\be x-\al y},1)z^k=0. 
			\end{equation}
			
			Assume that a point, say $(a_1,b_1,1)=Q_1=L\cap l_{z_1}$, is such that the system $(\star)$ is not determined. Then
			\begin{equation}\label{eq:cond on g}
				\begin{cases}
					g_{k,0,0}(a_1,b_1,1)=a_1^k\\
					g_{k-1,1,0}(a_1,b_1,1)=ka_1^{k-1}b_1\\
					\dots\\
					g_{0,0,k}(a_1,b_1,1)=1.
				\end{cases}
			\end{equation}
			
			Put $z=-a_1x-b_1y$ into ($*$). 
			
			Then you get 
			$$\frac{\ga y-\be (-a_1x-b_1y)}{\be x-\al y}=a_1,$$
			
			$$\frac{\al (-a_1x-b_1y)-\ga x}{\be x-\al y}=b_1.$$

			So in ($*$) we get 
			$$
			g_{k,0,0}(a_1,b_1,1)x^k+g_{k-1,1,0}(a_1,b_1,1)x^{k-1}y+
			\dots +g_{0,0,k}(a_1,b_1,1)(-a_1x-b_1y)^k.$$
			Using (\ref{eq:cond on g})
			we see that this is identically zero and $L_{Q_1}$ divides $C_L$. 
		}
	\end{remark}
	
\section{Derivations}
	\label{sec:derivations}
	In this section we show a connection between the syzygies of $J^k$ and a bundle of derivations $\D_0^k(\A)$, analogous to the connection between $\syz(J/f)$ and $\D_0(\A)$. Showing this we will in Section \ref{sec:unexpecdness} establish a relation between the degree of a curve which can be obtained from the construction described in Theorem \ref{thm:main} and the exponents in the splitting type of $\D_0^k(\A)$ restricted to a general line $L$. A reader not familiar with the relation between $\syz(J/f)$ and $\D_0(\A)$ may want to see  Appendix to \cite{CHMN} for a detailed introduction to the subject (see also \cite{FV}, \cite{OT}).

	Let $J=( f_x,f_y,f_z)$. We have an exact sequence
	$$0 \longrightarrow \D^k(\A) \longrightarrow S^{\binom{k+2}{2}} \xrightarrow{\;\; \phi \;\;} (J/f)^k(|Z|-1) \longrightarrow 0,$$
	where, for an element $(g_{k,0,0},g_{k-1,1,0},\ldots,g_{0,0,k}) \in S^{\binom{k+2}{2}}$ there is
	$$\phi \left( 
	\begin{bmatrix}
		g_{k,0,0}\\
		\vdots \\
		g_{0,0,k}
	\end{bmatrix}
	\right) = g_{k,0,0}\cdot\Big(\frac{\partial f}{\partial x}\Big)^k+g_{k-1,1,0}\cdot\Big(\frac{\partial f}{\partial x}\Big)^{k-1}\frac{\partial f}{\partial y}+\ldots+g_{0,0,k}\cdot\Big(\frac{\partial f}{\partial z}\Big)^k \pmod f,$$
	and $\D^k(\A) \subset \Der^k(S) $ is the set of such derivations $\delta$ that $\delta(f) \in J^{k-1} (f)$.
	Let us remind, that here  $\Big(\frac{\partial}{\partial x}\Big)^{i_1}\Big(\frac{\partial}{\partial y}\Big)^{i_2}\Big(\frac{\partial}{\partial z}\Big)^{i_3}$ is a product of partial derivatives of first order, not a derivative of a higher order.  
	
	In order to define the main object of this section, the module $\D_0^k(\A)$, we need the following.
	\begin{definition}
		Let $i_1+i_2+i_3=k-1$. For all $j \in \big\{1,2,\ldots,\binom{k+1}{2}\big\}$ we define the derivations module $E_j\in D^k(\A)$:
		$$E_j= ( dx)^{i_1}(dy)^{i_2}(dz)^{i_3} \Big(M(k)\Big) \star P(k),$$
		where $M(k)$ and $P(k)$ are $\underbrace{k+1 \times \cdots \times k+1}_{\text{$k$}}$ dimensional matrices which consist of all monomials and derivatives of degree and rank $k$ respectively. The symbol $\star$ denotes the Hadamard product of matrices. 
	\end{definition}
	
	\begin{example}\label{ex:euler}
	
		For $k=2$ we have that 
		$$E_1 = dx (M(2)) \star P(2)= dx \left( \begin{bmatrix}
			x^2 & xy & xz \\
			yx & y^2 & yz \\
			zx & zy & z^2
		\end{bmatrix} \right) \star 
		\begin{bmatrix}
			\partial_x \cdot \partial_x & \partial_x \cdot \partial_y & \partial_x \cdot \partial_z \\
			\partial_y \cdot \partial_x & \partial_y \cdot \partial_y & \partial_y  \cdot \partial_z \\
			\partial_z \cdot \partial_x & \partial_z \cdot \partial_y & \partial_z \cdot  \partial_z
		\end{bmatrix} = $$
		$$= \begin{bmatrix}
			2x & y & z \\
			y & 0 & 0\\
			z & 0 & 0
		\end{bmatrix} \star 
		\begin{bmatrix}
			\partial_x \cdot \partial_x & \partial_x \cdot \partial_y & \partial_x \cdot \partial_z \\
			\partial_y \cdot \partial_x & \partial_y \cdot \partial_y & \partial_y \cdot \partial_z \\
			\partial_z \cdot \partial_x & \partial_z \cdot \partial_y & \partial_z \cdot \partial_z
		\end{bmatrix} = \begin{bmatrix}
			2x\partial_x \cdot \partial_x & y\partial_x \cdot \partial_y & z\partial_x \cdot \partial_z \\
			y\partial_y \cdot \partial_x & 0 & 0 \\
			z\partial_z \cdot \partial_x & 0 & 0
		\end{bmatrix},$$ which, due to symmetry of the elements of matrices, can be viewed as $E_1=[x\partial_x \cdot \partial_x, y\partial_x \cdot \partial_y, z\partial_x \cdot \partial_z, 0, 0, 0 ]. $
		
		Similarly we obtain $$E_2 =dy (M(2)) \star P(2)=[0, x\partial_x \cdot \partial_y,0, y\partial_y \cdot \partial_y, z\partial_y \cdot \partial_z, 0 ], $$ 
		$$E_3 =dz (M(2)) \star P(2)= [0,0, x\partial_x \cdot \partial_z, 0, y\partial_y \cdot \partial_z, z\partial_z \cdot \partial_z ].$$

\bigskip

    Observe that the action of $\phi$ on $E_j$ gives an element from
    $J\cdot (f)$; indeed $\phi(E_1)=f_x\cdot f$ and so on. 
    
    More generally, when $E_j\in D^k(\A)$, then $\phi$ on $\phi(E_j)=(f_x)^{i_1}(f_y)^{i_2}(f_z)^{i_3}f\in J^{k-1}\cdot f$.

	\end{example}

	\begin{definition}
		We define the module
		$$\D_0^k(\A)= \bigslant{\D^k(\A)}{SE_1 \oplus \cdots \oplus SE_{\binom{k+1}{2}}}.$$ 
	\end{definition}
	
	The following result will be used in the Section \ref{sec:unexpecdness}.
	
	\begin{proposition} 
		The sheafification of the module $D^k(\A)$ is a vector bundle  of rank $\binom{k+2}{2}$. The  sheafification of $D^k_0(\A)$ is a vector bundle of rank $k+1$, with the first Chern class equal
		$\frac{k(k+1)}{2}-|Z|.$
		
	\end{proposition}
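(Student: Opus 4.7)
The plan is to work sheaf-theoretically with the defining exact sequence
\[
0 \to \D^k(\A) \to S^{\binom{k+2}{2}} \xrightarrow{\phi} (J/f)^k(|Z|-1) \to 0,
\]
together with the quotient presentation $\D^k_0(\A) = \D^k(\A)\big/\bigoplus_{j=1}^{\binom{k+1}{2}} SE_j$, in three stages.

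\textbf{Rank and local freeness of the sheafification of $\D^k(\A)$.} I would first sheafify the sequence above on $\P^2$. The cokernel $\widetilde{(J/f)^k(|Z|-1)}$ is annihilated by $f$ and hence is a torsion sheaf, supported on the reduced curve $V(f)=H_1\cup\cdots\cup H_{|Z|}$; in particular its generic rank on $\P^2$ is zero. It follows that $\widetilde{\D^k(\A)}$ has generic rank $\binom{k+2}{2}$ and, being a subsheaf of $\O_{\P^2}^{\binom{k+2}{2}}$, is torsion free. On the smooth surface $\P^2$ reflexive sheaves are locally free, and reflexivity of $\widetilde{\D^k(\A)}$ can be deduced from a depth computation: one needs to show that the torsion cokernel has no $0$-dimensional associated points, which is a local statement at the singular points of $\A$ and is the same kind of computation used for $\D(\A)$ in \cite{CHMN,FV,OT}, carried out for the $k$-th power of the jacobian ideal.

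\textbf{The Euler sub-bundle.} Each $E_j$ has its $\binom{k+2}{2}$ components linear in $x,y,z$: the matrix $M(k)$ stores monomials of degree $k$, and the operator $(dx)^{i_1}(dy)^{i_2}(dz)^{i_3}$ with $i_1+i_2+i_3=k-1$ drops the monomial degree by $k-1$, so each surviving component of $E_j$ has degree one. Hence $SE_j \cong S(-1)$. The elements $E_1,\dots,E_{\binom{k+1}{2}}$ are $S$-linearly independent by inspection: distinct multi-indices $(i_1,i_2,i_3)$ place the nonzero entries of $E_j$ in disjoint slices of the hypermatrix $M(k)\star P(k)$ (see Example \ref{ex:euler} for the $k=2$ illustration), so no nontrivial $S$-combination can vanish. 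Consequently the sheafification of $\bigoplus_j SE_j$ is the locally free subsheaf $\O_{\P^2}(-1)^{\binom{k+1}{2}}\subset \widetilde{\D^k(\A)}$, and the quotient $\widetilde{\D^k_0(\A)}$ is locally free of rank $\binom{k+2}{2}-\binom{k+1}{2}=k+1$.

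\textbf{First Chern class.} Combining the two short exact sequences
\[
0 \to \O_{\P^2}(-1)^{\binom{k+1}{2}} \to \widetilde{\D^k(\A)} \to \widetilde{\D^k_0(\A)} \to 0 \quad\text{and}\quad 0 \to \widetilde{\D^k(\A)} \to \O_{\P^2}^{\binom{k+2}{2}} \to \widetilde{(J/f)^k(|Z|-1)} \to 0
\]
with additivity of $c_1$ yields
\[
c_1(\widetilde{\D^k_0(\A)}) = \tfrac{k(k+1)}{2} - c_1\bigl(\widetilde{(J/f)^k(|Z|-1)}\bigr).
\]
It then remains to show $c_1(\widetilde{(J/f)^k(|Z|-1)}) = |Z|$. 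I would obtain this by reading off the coefficient of $n$ in the Hilbert polynomial of the graded module $(J/f)^k(|Z|-1)$, using the $k$-th power of the Euler presentation of $J$ and the fact that, after quotienting by the Euler relation, the generic stalk of this torsion sheaf along each component $H_i$ is a line bundle contributing $1$ to $c_1$, for a total of $|Z|$. As a sanity check, the $k=1$ case recovers the known $c_1(\widetilde{\D_0(\A)})=1-|Z|$ of \cite{CHMN}.

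The main obstacle is the local-freeness step at the singular points of the arrangement, where a pure generic-rank argument does not suffice and one must verify the depth condition by a local computation generalizing the classical one for $\D(\A)$. The Chern class identity $c_1(\widetilde{(J/f)^k(|Z|-1)})=|Z|$ is the other nontrivial input; it follows from an elementary, but careful, Hilbert-polynomial count using the presentation of $J^k$ modulo $f\cdot J^{k-1}$.
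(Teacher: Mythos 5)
Your overall architecture (sheafify the two exact sequences, use additivity of $c_1$, compute $c_1$ of the torsion cokernel) matches the paper's, and your Chern class bookkeeping is right; the paper obtains $c_1\bigl(\widetilde{(J/f)^k(|Z|-1)}\bigr)=|Z|$ from the $k$-th symmetric power of $0\to\O_{\P^2}(-|Z|)\to\mathcal{J}\to\mathcal{J}/f\O_{\P^2}\to 0$ rather than from a Hilbert-polynomial count, but your ``generic rank one along each of the $|Z|$ dual lines'' reasoning is a legitimate alternative. The problem is that both local-freeness steps, which are the actual content of the statement, are left with genuine gaps. For $\widetilde{\D^k(\A)}$ you correctly reduce local freeness to the claim that $(J/f)^k(|Z|-1)$ has no zero-dimensional associated points, but then defer that claim to ``the same kind of computation as for $\D(\A)$''; this is exactly the nontrivial point, and it is not the same computation, since the module is now $J^k/fJ^{k-1}$ at the singular points of $\A$. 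The paper sidesteps it: a $3\times 3$ diagram identifies $\D^k(\A)$ with the syzygy module of a presentation of $J^k(|Z|-1)$, i.e., with the kernel of a surjection from a free module onto the torsion-free sheaf $\widetilde{J^k}(|Z|-1)$; such a kernel is reflexive, hence locally free on a smooth surface. You should either adopt that route or actually carry out the depth verification.

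The second gap concerns $\widetilde{\D^k_0(\A)}$. $S$-linear independence of the $E_j$ only gives that $\bigoplus_j SE_j$ is free and that the quotient has generic rank $k+1$; for the quotient of a bundle by a locally free subsheaf to be locally free you need the inclusion to be injective on fibers at \emph{every} point of $\P^2$, otherwise the quotient acquires torsion. Moreover your justification of independence is factually wrong: the nonzero entries of distinct $E_j$ do not sit in disjoint slices --- already for $k=2$, $E_1$ and $E_2$ both have a nonzero entry in the $\partial_x\partial_y$ slot (namely $y$ and $x$, see Example \ref{ex:euler}). The paper supplies the missing fiberwise statement through the Lemma $\O_{\P^2}^{\binom{k+2}{2}}\big/\bigoplus_j\O_{\P^2}E_j\cong\sym^k(T_{\P^2}(-1))$: since this quotient inside the ambient free sheaf is locally free of rank $k+1$, the $E_j$ span a subbundle, so $\widetilde{\D^k_0(\A)}$, sitting between $\bigoplus_j\O_{\P^2}E_j$ and $\O_{\P^2}^{\binom{k+2}{2}}$, is locally free; the same identification then gives $c_1(\sym^k(T_{\P^2}(-1)))=\binom{k+1}{2}$, which feeds the Chern class formula. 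Without that lemma, or a direct pointwise-independence check (e.g., an upper-triangular ordering of the multi-indices on the chart where one coordinate is nonzero), your second and third steps do not close.
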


	\begin{proof}
		
		We use the exact sequence below.  
		
		\begin{center}
			\begin{tikzpicture}
				\matrix (m) [matrix of math nodes,row sep=2.5em,column sep=3.5em,minimum width=2em,nodes={minimum height=3em, anchor=center}]
				{
					&  & 0 & 0 & \\
					& 0 & \syz((J(|Z|-1))^k) & D^k(\A) & \\
					0 & J^{k-1}fS(-1) & J^{k-1}fS(-1)\oplus S^{\binom{k+2}{2}} & S^{\binom{k+2}{2}} & 0\\
					0 & J^{k-1}fS(-1) & (J)^k(|Z|-1) & (J/f)^k(|Z|-1) & 0\\
					& 0 & 0 & 0 & \\};
				\path[-stealth]
				(m-3-1) edge node [left] {} (m-3-2)
				(m-3-2) edge node [left] {} (m-3-3)
				(m-3-3) edge node [left] {} (m-3-4)
				(m-3-4) edge node [left] {} (m-3-5)
				(m-4-1) edge node [left] {} (m-4-2)
				(m-4-2) edge node [above] {$\times f$} (m-4-3)
				(m-4-3) edge node [left] {} (m-4-4)
				(m-4-4) edge node [left] {} (m-4-5)
				(m-2-2) edge node [left] {} (m-3-2)
				(m-3-2) edge node [left] {} (m-4-2)
				(m-4-2) edge node [left] {} (m-5-2)
				(m-1-3) edge node [left] {} (m-2-3)
				(m-2-3) edge node [left] {} (m-3-3)
				(m-3-3) edge node [left] {} (m-4-3)
				(m-4-3) edge node [left] {} (m-5-3)
				(m-1-4) edge node [left] {} (m-2-4)
				(m-2-4) edge node [left] {} (m-3-4)
				(m-3-4) edge node [right] {$\phi$} (m-4-4)
				(m-4-4) edge node [left] {} (m-5-4)
				;
			\end{tikzpicture}
		\end{center}
		
		The middle column has a free module in the middle. Ther sheafification of $J^k$ is torsion-free. Thus, $\textrm{syz}(J^k)$ is after sheafification a reflexive sheaf $\widetilde{(\textrm{syz}(J^k))}$. On a surface, a reflexive sheaf is locally free (see \cite{oss}),  so
		$\widetilde{(\textrm{syz}(J^k))}$ is a vector bundle of rank $\binom{k+2}{2}$, and so is $\widetilde{D^k({\cal{A}})}$.

    We may, analogously as it was done in the case $k=1$ at the end of Appendix in \cite{CHMN}, represent $D^k(\A)$ as a direct sum, where one summand is	$SE_1\oplus \cdots \oplus SE_{\binom{k+1}{2}}$ and the other is the module of such derivations $\delta$ that $\delta(f)=0.$
Dividing $D^k(\A)$ by this first summand we get $D^k_0({\cal{A}})$.		
		
		Observe that	$\widetilde{D^k_0({\cal{A}})}$ is a vector bundle of rank $k+1$ as it arises as a division of 
		$\widetilde{D^k({\cal{A}})}$ by the sum of $\mathcal{O}_{\P^2}E_j$ for $ j=1,\dots,\binom{k+1}{2}$ and  $\bigoplus \mathcal{O}_{\P^2} E_j$  corresponds to a global  non-vanishing section.

		To get the Chern class of $\widetilde{D^k_0({\cal{A}})}$, consider the following diagram: 

		\begin{center}
			\begin{scriptsize}
				\begin{tikzpicture}
					\matrix (m) [matrix of math nodes,row sep=3em,column sep=4em,minimum width=2em, nodes={minimum height=3em, anchor=center}]
					{
						& 0 & 0 &  & \\
						0& SE_1\oplus \cdots \oplus SE_{\binom{k+1}{2}} & SE_1\oplus \cdots \oplus SE_{\binom{k+1}{2}} & 0 & \\
						0 & \D^k(\A) & S^{\binom{k+2}{2}} & (J/fS)^k(|Z|-1) & 0\\
						0 & \D^k_0(\A) & S^{\binom{k+2}{2}} \Big/ SE_1\oplus \cdots \oplus SE_{\binom{k+1}{2}} & (J/fS)^k(|Z|-1)& 0\\
						& 0 & 0 & 0 & \\};
					\path[-stealth]
					(m-2-1) edge node [left] {} (m-2-2)
					(m-2-2) edge node [left] {} (m-2-3)
					(m-2-3) edge node [left] {} (m-2-4)
					(m-3-1) edge node [left] {} (m-3-2)
					(m-3-2) edge node [left] {} (m-3-3)
					(m-3-3) edge node [above] {$\phi$} (m-3-4)
					(m-3-4) edge node [left] {} (m-3-5)
					(m-4-1) edge node [left] {} (m-4-2)
					(m-4-2) edge node [left] {} (m-4-3)
					(m-4-3) edge node [left] {} (m-4-4)
					(m-4-4) edge node [left] {} (m-4-5)
					(m-1-2) edge node [left] {} (m-2-2)
					(m-2-2) edge node [left] {} (m-3-2)
					(m-3-2) edge node [left] {} (m-4-2)
					(m-4-2) edge node [left] {} (m-5-2)
					(m-1-3) edge node [left] {} (m-2-3)
					(m-2-3) edge node [left] {} (m-3-3)
					(m-3-3) edge node [left] {} (m-4-3)
					(m-4-3) edge node [left] {} (m-5-3)
					(m-2-4) edge node [left] {} (m-3-4)
					(m-3-4) edge node [left] {} (m-4-4)
					(m-4-4) edge node [left] {} (m-5-4)
					;
				\end{tikzpicture}
			\end{scriptsize}
		\end{center}
	
We claim that
		$$c_1\big((\mathcal{J}/f\mathcal{O}_{\P^2})^k(|Z|-1) \big)=|Z|, $$ 
		where $\mathcal{J}$ is the sheafification of $J$.
		Indeed, as in \cite{CHMN}, we have the sequence
		$$0\to \mathcal{O}_{\P^2}(-|Z|)\stackrel{\cdot f}{\to} \mathcal{J}\to \mathcal{J}/f\mathcal{O}_{\P^2} \to 0.$$
		We have  $c_1(\mathcal{J}(|Z|-1))=|Z|-1$ (by the Grothendieck–Riemann–Roch theorem) so
		$c_1(\mathcal{J}(|Z|-1)/f\mathcal{O}_{\P^2})=-(-1)+(|Z|-1)=|Z|.$  
		
		Then take the $k$-th  symmetric power of the above exact sequence: 
		
		$$  0\to \mathcal{J}^{k-1}\otimes\mathcal{O}_{\P^2}(-|Z|)\stackrel{\cdot f}{\to} \mathcal{J}^k\to (\mathcal{J}/f\mathcal{O}_{\P^2})^k\to 0,  $$		(more explanations about the symmetric power and exact sequences are in Section \ref{sec:unexpecdness}).
So we get:
  $c_1(\mathcal{J}^k(|Z|-1))=|Z|-1$, $c_1(\mathcal{J}^{k-1})=0$,  $c_1(\mathcal{O}_{\P^2}(-1))=-1$, and this gives the claim.

		Next, 	we need to
		compute the first Chern class of
		$\mathcal{O}_{\P^2}^{\binom{k+2}{2}} \Big/ \mathcal{O}_{\P^2}E_1\oplus \cdots \oplus \mathcal{O}_{\P^2}E_{\binom{k+1}{2}}$.

		It is known that the sheafification of $S^3/E$ is  $T_{\P^2}(-1)$, so we prove the following lemma.
		\begin{lemma}
			$\mathcal{O}_{\P^2}^{\binom{k+2}{2}} \Big/ \mathcal{O}_{\P^2}E_1\oplus \cdots \oplus \mathcal{O}_{\P^2}E_{\binom{k+1}{2}} \cong \sym^k(T_{\P^2}(-1)).$
		\end{lemma}
	
		\begin{proof} 
			Denote by $\sim$ the permutation action of the symmetric group $S_n$. Then we have the consecutive isomorphisms 
			$$\sym^k(S^3/E) \quad \cong\quad \bigslant{\raisebox{.4em}{$\bigotimes_{i=1}^k S^3$} \Big/  \raisebox{-.4em}{$\bigoplus_{j=1}^k S^3 \otimes S^3 \otimes \cdots \otimes \overset{(j)}{E} \otimes \cdots \otimes S^3$}}{\sim} \quad \cong$$

            $$\left. \raisebox{.4em}{$\left. \raisebox{.4em}{$\bigotimes_{i=1}^k S^3$} /  \raisebox{-.4em}{$\sim$} \right.$}  \middle/ \raisebox{-.4em}{$\left. \raisebox{.4em}{$ \bigoplus_{j=1}^k S^3 \otimes S^3 \otimes \cdots \otimes \overset{(j)}{E} \otimes \cdots \otimes S^3$ } /  \raisebox{-.4em}{$\sim$} \right.$} \right. \quad \cong \quad \left. \raisebox{.4em}{}S^{\binom{k+2}{2}} \middle/ \raisebox{-.4em}{$\bigoplus_{j=1}^{\binom{k+1}{2}} SE_j$}\right .$$
			After taking the sheafification we get the assertion.
	
		\end{proof}  

		To complete the proof of the proposition we use the results on symmetric powers of a vector bundles and  their Chern classes, see eg.  \cite{Got,iena}. We obtain
		that 
		$$c_1(\sym^k(T_{\P^2}(-1))=\binom{k+1}{2}\cdot c_1(T_{\P^2}(-1))=\binom{k+1}{2}.$$
	
	\end{proof}  

\section{Unexpectedness}
	\label{sec:unexpecdness}
	In the last section, we have seen that 
	the  syzygies of $(J/f)^k(|Z|-1)$  form (after sheafification) a vector bundle of rank $k+ 1$. 
	Thus, this bundle splits as $L$ to ${\cal{O}}_L(-a_1)\oplus\dots\oplus{\cal{O}}_L(-a_{k+1})$, with $0\leq a_1\leq\dots\leq a_{k+1}$. 

	Let us remind, that the construction presented in Section \ref{sec:construction} gives a curve $C=C_L$ of degree $a_i+k$ passing through a generic point $P_L$ with multiplicity $a_i$, so this is a curve of type $(a_i+k,a_i)$.
	The next result says when  such a curve is unexpected. This result is related to Theorem 1.5 from \cite{CHMN}.
	\begin{proposition}
		Take the syzygies of $(J/f)^k(|Z|-1)$ of degree $a_i$. 
		The curve $C$ of type $(a_i+k,a_i)$ (constructed as in Section \ref{sec:construction-generalized}) is unexpected if:
		
		1. $Z$ imposes independent conditions on curves of degree $a_i+k$ and
		
		2. $(a_i+1)(k+1)\leq \sum_{j=1}^{k+1}a_{j}$. 
		
	\end{proposition}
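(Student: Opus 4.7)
The plan is to show that conditions (1) and (2) together force $\max\!\bigl(0,\dim[I_Z]_{a_i+k}-\binom{a_i+1}{2}\bigr)=0$, so that the mere existence of any nonzero element in $[I_{P\cup Z}]_{a_i+k}$ already produces the strict inequality in the definition of unexpectedness; such an element is then supplied by Theorem~\ref{thm:main} applied to a syzygy of degree $a_i$.

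First I would apply Theorem~\ref{thm:main} to a reduced syzygy of degree $a_i$; such a syzygy exists because $a_i$ appears as an exponent in the splitting of $\widetilde{\D_0^k(\A)}|_L$. The theorem produces a nontrivial curve $C_L\subset\check{\P}^2$ of degree at most $a_i+k$, passing through $Z$ and having multiplicity at least $a_i$ at $P_L$, so $\dim[I_{P\cup Z}]_{a_i+k}\geq 1$. If the defining equation happens to have degree strictly less than $a_i+k$, one multiplies it by a generic form of the missing degree that does not vanish at $P_L$, preserving both the passage through $Z$ and the multiplicity at $P_L$.

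Second I would translate condition (2) into a bound on the expected deficit. Condition (1) gives $\dim[I_Z]_{a_i+k}=\binom{a_i+k+2}{2}-|Z|$ (or $0$ if the right-hand side is negative). The first-Chern-class computation in Section~\ref{sec:derivations} yields $c_1\bigl(\widetilde{\D_0^k(\A)}\bigr)=\binom{k+1}{2}-|Z|$, so restricting the rank-$(k+1)$ bundle to the generic line $L$ and equating Chern classes forces
$$\sum_{j=1}^{k+1}a_j=|Z|-\binom{k+1}{2}.$$
Combining this with Pascal's identity
$$\binom{a_i+k+2}{2}=\binom{a_i+1}{2}+(a_i+1)(k+1)+\binom{k+1}{2}$$
transforms the expected deficit into
$$\dim[I_Z]_{a_i+k}-\binom{a_i+1}{2}=(a_i+1)(k+1)-\sum_{j=1}^{k+1}a_j,$$
which is $\leq 0$ by condition (2). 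Hence $\max\!\bigl(0,\dim[I_Z]_{a_i+k}-\binom{a_i+1}{2}\bigr)=0$, and the curve from step one supplies the strict inequality needed for unexpectedness.

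The main technical hurdle is reconciling the two meanings of ``degree $a_i$'': one as the degree of a minimal module syzygy of $(J/f)^k(|Z|-1)$ used in Theorem~\ref{thm:main}, and the other as a splitting exponent of $\widetilde{\D_0^k(\A)}|_L$. One must verify that a nonzero syzygy of the relevant degree actually produces a nondegenerate $C_L$, rather than one whose image collapses to lower-dimensional content; this is precisely the content of the identification between $\widetilde{\syz((J/f)^k(|Z|-1))}$ and $\widetilde{\D_0^k(\A)}$ developed in Section~\ref{sec:derivations}. Once this correspondence is pinned down, the remainder of the argument is the clean dimension count above.
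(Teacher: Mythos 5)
Your proof is correct and follows essentially the same route as the paper's: both reduce unexpectedness to the inequality $\binom{a_i+k+2}{2}-|Z|-\binom{a_i+1}{2}\leq 0$ (using condition (1) together with the existence of the curve from Theorem \ref{thm:main}) and then derive it from condition (2) via the splitting-type relation $\sum_{j=1}^{k+1}a_j=|Z|-\binom{k+1}{2}$. Your explicit verification that $[I_{P\cup Z}]_{a_i+k}\neq 0$, and your remark about padding a curve of too-low degree, make precise a step the paper leaves implicit, but the substance of the argument is identical.
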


	\begin{proof}
	
		Indeed, under our assumptions, $C$ is unexpected when
		$$\binom{a_i+k+2}{2}-|Z|-\binom{a_i+1}{2}\leq 0.$$
		
		This is equivalent to
		
		$$ka_i+a_i+\frac{k(k+3)}{2}+1\leq |Z|.$$

		Remember that ${\cal{S}}$ splits over $L$ to ${\cal{O}}_L(-a_1)\oplus\dots\oplus{\cal{O}}_L(-a_{k+1})$ with $a_1\leq\dots\leq a_{k+1}$, and this gives 
		$$a_1+\dots+a_{k+1}=-\frac{k(k+1)}{2}+|Z|.$$
	
		On the other hand, if
		$a_1+\dots+a_{k+1}=-\frac{k(k+1)}{2}+|Z|$, we have, by Assumption 2,
		$$|Z|=\frac{k(k+1)}{2}+a_1+\dots+a_{k+1}\geq \frac{k(k+1)}{2}+(k+1)(a_i+1)=ka_i+a_i+\frac{k(k+3)}{2}+1.$$
	
	\end{proof}

	The above Proposition explains, for example, the unexpectedness of the curve 
	of type $(9,7)$ for $DF_5$ (see Example \ref{ex:F3-4-5} below), or of type $(7,4)$
	for $DF_5$-without two points $(1,e,e^2), (1,e^2,1) $. However, it does not explain the unexpectedness of the curve of type $(8,5)$ for $DF_5$.
	To explain the unexpectedness of a curve of type $(d+k, d)$ with positive expected dimension, we have to prove a lemma, generalizing Lemma 3.3 from \cite{CHMN}.  
	Let us quote:
	\begin{lemma}[Lemma 3.3 of \cite{CHMN}]\label{lem:chmn3.3}
		Let $Z$ be a set of points on $\check{\P}^2$ and let $P$ be a general point on $\P^2$. Let $f$ denote, as above, the product of lines dual to the points of $Z$. Let ${\cal{S}}$ be the (rank 2) bundle of syzygies of $(J/f)(|Z|-1)$. This bundle splits on a generic line $L$ (dual to $P$), with the splitting type $(a,b)$.
		
		Then, for each integer j,
		$$\dim[I_Z+jP ]_{j+1} = \max\{0, j - a + 1\} + \max\{0, j - b + 1\}.$$
	\end{lemma}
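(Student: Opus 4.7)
The plan is to reduce the claim to a cohomology computation on the line $L$. Given the splitting $\mathcal{S}|_L \cong \mathcal{O}_L(-a)\oplus\mathcal{O}_L(-b)$, twisting by $\mathcal{O}_L(j)$ and taking global sections immediately yields
$$h^{0}\bigl(L,\mathcal{S}|_L(j)\bigr) \;=\; \max\{0,\,j-a+1\} + \max\{0,\,j-b+1\},$$
so the task is to construct a natural isomorphism $[I_Z+jP]_{j+1} \cong H^{0}(L,\mathcal{S}|_L(j))$.

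To set up the correspondence, I would use the identification $\mathcal{S} \cong \widetilde{\D_0(\A)}$ coming from the standard exact sequence relating $\syz(J/f)$ to logarithmic derivations modulo Euler. A reduced syzygy of degree $j$ is a triple $(g_0,g_1,g_2)$ with $g_0 f_x + g_1 f_y + g_2 f_z \in (f)$; its restriction to $L$ yields an element of $H^{0}(L,\mathcal{S}|_L(j))$ via the short exact sequence
$$0\to \mathcal{S}(j-1)\xrightarrow{\,\cdot L\,}\mathcal{S}(j)\to \mathcal{S}|_L(j)\to 0.$$
The CHMN construction (Construction \ref{prop:chmn5.10} with $k=1$) then associates to such a syzygy a curve $C_L$ of degree $j+1$ passing through $Z$ and singular of order $j$ at $P$, giving a linear map
$$\Psi: H^{0}\bigl(L,\mathcal{S}|_L(j)\bigr)\longrightarrow [I_Z+jP]_{j+1}.$$

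I would then prove $\Psi$ is an isomorphism in two steps. For injectivity: if $\Psi$ sends a nonzero class to the zero curve, eliminating the parameter along $L$ (as in the explicit computation inside the proof of Theorem \ref{thm:main}) forces $(g_0,g_1,g_2)|_L$ to be proportional to the coordinates of $L$, putting the syzygy in the Euler summand and hence zero in $\mathcal{S}|_L(j)$. For surjectivity: starting from $F\in[I_Z+jP]_{j+1}$, one reads off a candidate triple from the tangent-cone expansion of $F$ at $P$; the vanishing of $F$ along each line of $\A$ combined with the multiplicity $j$ at $P$ forces this triple to satisfy the $J/f$-syzygy relation (not merely a $J$-syzygy relation), and its degree is fixed to be $j$ by the degree-$(j+1)$ condition on $F$.

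The main obstacle is surjectivity: not every tangent-cone configuration at $P$ extends to a curve vanishing on $Z$, and the syzygy bundle is precisely what captures those configurations that do. A clean way to organize the argument is to view both sides as kernels of evaluation-type maps and match them dimension-by-dimension, using the Chern class identity $a+b=|Z|-1$ (coming from $c_1(\widetilde{\D_0(\A)})=1-|Z|$) to pin down totals. Genericity of $P$ --- equivalently of $L$ --- is used throughout: to guarantee that the splitting type $(a,b)$ is attained so the cohomology formula applies, and to rule out the exceptional loci where the map $Q\mapsto \ell_Q\cap\ell_{s(Q)}$ in the CHMN construction degenerates.
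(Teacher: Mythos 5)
Your reduction of the right-hand side to $h^{0}(L,\mathcal{S}|_L(j))$ is fine, but the bridge to $[I_Z+jP]_{j+1}$ is where the proof actually lives, and your sketch of that bridge has a genuine gap. First, the map $\Psi$ is only defined on sections of $\mathcal{S}|_L(j)$ that are restrictions of honest polynomial syzygies, i.e.\ on the image of $H^{0}(\P^2,\mathcal{S}(j))\to H^{0}(L,\mathcal{S}|_L(j))$; surjectivity of that restriction map is equivalent to injectivity of $H^1(\mathcal{S}(j-1))\to H^1(\mathcal{S}(j))$ and is not automatic, so even the domain of $\Psi$ needs an argument. Second, and more seriously, your surjectivity step is not a proof: you never actually explain how the tangent-cone data of a degree-$(j+1)$ curve with a $j$-fold point at $P$ produces a degree-$j$ triple satisfying the syzygy relation (note also that $F$ vanishes at the \emph{points} of $Z\subset\check{\P}^2$, not ``along each line of $\A$'' --- the lines live in $\P^2$). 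The fallback of ``matching dimensions using $a+b=|Z|-1$'' cannot close this gap: the first Chern class only controls the sum $a+b$, whereas the lemma is a statement for each $j$ separately, and splitting types with the same $a+b$ (say balanced versus $(0,|Z|-1)$) give very different values of $h^{0}(\mathcal{S}|_L(j))$ for intermediate $j$. Injectivity of $\Psi$ only yields the inequality $\dim[I_Z+jP]_{j+1}\ge h^{0}(L,\mathcal{S}|_L(j))$, and for $j\ge b$ this coincides with the naive conditions count, so it gives nothing beyond the trivial lower bound; the substantive content of the lemma is the reverse inequality, which your argument does not deliver.

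For comparison, the paper does not prove this lemma by the curve construction at all (it quotes it from CHMN and proves the generalization, Lemma \ref{lem:generaliz 3.3}, whose $k=1$ case is this statement). The route there is sheaf-theoretic: one identifies $\mathcal{S}$ with $p_*q^*I_Z(1)$ on the flag variety via Faenzi--Vall\`es, observes that $q$ restricted to $Y=\{(Q,\ell):Q\in L\}$ is the blowup of $\check{\P}^2$ at $P$, writes $\dim[I_{Z+jP}]_{j+1}=h^{0}(Y,I_Z\otimes\O_Y((j+1)H-jE))$, and pushes down to $L$ with the projection formula and the vanishing of $R^1p_*$ (which uses genericity of $L$). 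That computation produces the exact equality $h^{0}(L,\oplus_i\O_L(j-a_i))$ in one stroke, with no need to invert the syzygy-to-curve construction. If you want to keep your more explicit approach, you would need to supply the inverse map curve $\mapsto$ syzygy in detail (this is essentially CHMN's own argument) or replace the surjectivity step by the pushforward computation above.
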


	The generalization is  the following:
	
	\begin{lemma}\label{lem:generaliz 3.3}
		Let $Z$, $P$, $f$ and $L$ be as above. Let ${\cal{S}}$ be the (rank $k+1$) bundle of syzygies of  $(J/f)^k(|Z|-1)$.  This bundle splits on a generic line $L$ (dual to $P$), with the splitting type $(a_1,a_2,\dots a_{k+1})$.
		
		Then, for each integer j,
		$$\dim[I_Z+jP ]_{j+k} = \max\{0, j - a_1 + 1\} +\dots +\max\{0, j - a_{k+1} + 1\}.$$
		
	\end{lemma}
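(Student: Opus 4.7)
The plan is to produce an explicit linear isomorphism
$$\Phi \colon H^0(L, {\cal S}|_L(j)) \longrightarrow [I_{Z+jP}]_{j+k},$$
adapting the argument of Lemma \ref{lem:chmn3.3} by replacing the rank-$2$ syzygy bundle with our rank-$(k+1)$ bundle ${\cal S}$ and invoking the construction of Theorem \ref{thm:main} in place of Construction \ref{prop:chmn5.10}. Once $\Phi$ is exhibited, the desired equality follows immediately from the standard computation
$$h^0(L, {\cal S}|_L(j)) = \sum_{i=1}^{k+1} h^0(\mathcal{O}_L(j - a_i)) = \sum_{i=1}^{k+1} \max\{0, j - a_i + 1\},$$
using the splitting ${\cal S}|_L \cong \bigoplus_{i=1}^{k+1} \mathcal{O}_L(-a_i)$.

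The forward map $\Phi$ is essentially the content of Theorem \ref{thm:main}, applied with $d$ replaced by $j$. A class in $H^0(L, {\cal S}|_L(j))$ is represented by a tuple $(g_{k,0,0}, \ldots, g_{0,0,k})$ of degree-$j$ forms giving a syzygy of $J^k$ modulo $(L)$ and modulo the Euler-type elements $E_1, \ldots, E_{\binom{k+1}{2}}$, and Theorem \ref{thm:main} assigns to it a curve $C_L$ of degree $j+k$ through $Z$ with a $j$-fold point at $P = \check{L}$, whose defining polynomial lies in $[I_{Z+jP}]_{j+k}$. Linearity is transparent from the explicit formulas of the theorem.

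For the inverse, I would invert the manipulation carried out in part \textbf{Ad 5)} of the proof of Theorem \ref{thm:main}. Given $F \in [I_{Z+jP}]_{j+k}$, the $j$-fold vanishing at $P = (\al, \be, \ga)$ allows one, after substituting $a = (\ga y - \be z)/(\be x - \al y)$ and $b = (\al z - \ga x)/(\be x - \al y)$ and clearing $(\be x - \al y)^j$, to write $F$ in the form
$$a^j h_1(x,y,z) + a^{j-1} h_2(x,y,z) + \cdots + h_j(x,y,z),$$
with each $h_r$ homogeneous of degree $k$. Reading off its coefficients recovers a degree-$j$ tuple $(g_{k,0,0}, \ldots, g_{0,0,k})$, determined uniquely up to the Euler ambiguity, and the vanishing of $F$ along $Z$ translates, via the gradient identities $\nabla_Q f = h(Q)\nabla_Q \ell_i$ at each $H_i \cap L$ used in part \textbf{Ad 2)}, into the required syzygy relation $\sum g_{i_1,i_2,i_3} f_x^{i_1} f_y^{i_2} f_z^{i_3} \in (f, L)$.

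The main obstacle is showing that this inverse is well-defined as a map into the Euler quotient: one must verify that the ambiguity in the extracted tuple across different representations of $F$ lies exactly in the span $SE_1 \oplus \cdots \oplus SE_{\binom{k+1}{2}}$, using the Euler identity $\al f_x + \be f_y + \ga f_z = (\deg f)\, f$ from part \textbf{Ad 1)}, and dually that every element of $[I_{Z+jP}]_{j+k}$ arises in this way, i.e.\ surjectivity of $\Phi$. These two checks amount to the same cohomological book-keeping that CHMN carry out for $k = 1$, now promoted to the symmetric-power framework developed in Section \ref{sec:derivations}; the rank-$(k+1)$ and $\binom{k+1}{2}$ Euler contributions should match on the nose by the Chern class computations of the previous section.
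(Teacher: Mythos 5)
Your strategy inverts the logical structure of the paper's argument, and the inversion does not go through. The paper does not prove the lemma by exhibiting an explicit bijection built from Theorem \ref{thm:main}; it proves the sheaf-theoretic identification ${\cal S}\cong p_*q^*I_Z(k)$ on the Faenzi--Vall\`es flag variety (the ``Claim'' occupying most of the proof, which rests on showing $\Hom(\sym^k(T_{\P^2}(-1)),{\cal O}_{L_z})=\C$ by an induction through symmetric powers of the cotangent sequence, so that the kernel of any map $\sym^k(T_{\P^2}(-1))\to\bigoplus_{z}{\cal O}_{l_z}$ with nonzero components is unique up to isomorphism), and then computes $\dim[I_{Z+jP}]_{j+k}$ as $h^0$ of $p_*q^*I_Z(k)\otimes\O_L(j)$ via the blow-up interpretation of $q$, the Leray spectral sequence and the projection formula. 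Everything you defer to ``the same cohomological book-keeping'' \emph{is} that identification; it is the entire content of the proof, not a routine check, and CHMN's own Lemma 3.3 is likewise proved this way rather than by inverting their Construction 5.10.

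The concrete gap is the inverse map. First, given $F\in[I_{Z+jP}]_{j+k}$, a decomposition $F=\sum_r(\ga y-\be z)^{j-r}(\be x-\al y)^r h_{r+1}$ with $\deg h_r=k$ exists but is not unique: the Koszul relations between the two linear forms through $P$ produce an ambiguity in the $h_r$ that has nothing to do with the Euler elements $E_j$, so ``reading off coefficients'' is not well defined even before one confronts the Euler quotient. Second, and more seriously, the assertion that the extracted tuple satisfies $\sum g_{i_1,i_2,i_3}f_x^{i_1}f_y^{i_2}f_z^{i_3}\in(f,L)$ is an identity of polynomials of degree $j+k(|Z|-1)$ along $L$, whereas the hypothesis that $F$ vanishes on $Z$ supplies, via the gradient identity, only $|Z|$ pointwise conditions at the points $L\cap\ell_i$; nothing in your argument forces the identity at the remaining points of $L$. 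Third, even granting a well-defined inverse, you would recover the $g$'s only as functions of the parameter on $L$, i.e.\ a section of some sheaf on $L$, and identifying that sheaf with ${\cal S}|_L$ rather than a proper subsheaf is again exactly the uniqueness-of-kernel argument you are trying to avoid. Finally, injectivity of the forward map $\Phi$ is never addressed: Theorem \ref{thm:main} guarantees only degree at most $j+k$ and multiplicity at least $j$, and a nonzero syzygy class could a priori map to the zero form. Without these points the argument yields at best an inequality, not the claimed equality of dimensions.
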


	\begin{proof}

		For the proof of this Lemma we need
		the construction described by Faenzi and Vall\`es in \cite{FV}. They consider the flag variety $\F =\{ (Q,l)\in \P^2\times \check{\P}^2 | Q\in l \} $. By $p, q$ they denote the projections to the first and the second factor, respectively. Then they consider the sheaf $p_*q^* I_Z(1)$ and they prove that this sheaf is isomorphic to the  logarithmic derivation bundle  $\widetilde {D_0^1(\cal{A}_Z)}$, so also it is isomorphic with the syzygies of  $(J/f)(|Z|-1)$.
		
		We want to prove an extension of this result to $k>1$, namely the following claim.
		
		\vspace{0.5cm}
		\textbf{Claim:} $$p_*q^*I_Z(k)\cong {\cal{S}}.$$
		
		Proof of the claim:
		
		{\bf Step I} The first part of the proof concerns the kernel $K$ of a map $\phi$:
		
		$$ 0\to K\to \sym^k(T_{\P^2}(-1))\stackrel{\phi}{\to} \bigoplus_{z\in Z}{\cal{O}}_{l_z}.$$
		
		We would like, similarly to 
		the argument given in \cite{FV}, prove that this kernel is unique up to an isomorphism.
		We begin with the fact already proved in \cite{FV}.	
		\begin{itemize}
			\item[1)] $\Hom(T_{\P^2}(-1),{\cal{O}}_{L_z})=\C$ 

			This follows from the fact that 
			$H^0(\Hom(T_{\P^2}(-1), {\cal{O}}_{L_z}))= H^0(T_{\P^2}(-1)^{\vee}\otimes {\cal{O}}_{L_z} )=H^0(\Omega_{\P^2}(1)\otimes {\cal{O}}_{L_z} )$.
			The cotangent sequence says:
			\begin{equation}\label{ctg}
				0\to I_{L_z}/I_{L_z}^2\to \Omega_{\P^2}\otimes {\cal{O}}_{L_z}\to \Omega_{L_z}\to 0.
			\end{equation}
			As $I_{L_z}={\cal{O}}_{L_z}(-1)$ and $ \Omega_{L_z}={\cal{O}}_{L_z}(-2)$ tensoring (\ref{ctg}) with 
			${\cal{O}}_{L_z}(1)$ we get
			\begin{equation}\label{ctg2}
				0\to {\cal{O}}_{L_z}\to \Omega_{\P^2}(1)\otimes {\cal{O}}_{L_z}\to {\cal{O}}_{L_z}(-1)\to 0.
			\end{equation}
			
			Taking the long sequence of cohomologies, we get
			$H^0(\Omega_{\P^2}(1)\otimes {\cal{O}}_{L_z} )= H^0({\cal{O}}_{\P^1})=\C$.

			\item[2)] $\Hom(\sym^k(T_{\P^2}(-1)), {\cal{O}}_{L_z})=\C$ 
			
			Here we proceed analogously as in $1)$, using the following facts:
			
			\item[a)] The dual of a symmetric power is the symmetric power of the dual space. 
			
			\item[b)] Symmetric power of a tensor product is given by the following formula:
			$ \sym^k(V\otimes W)=\bigoplus_{\lambda\vdash k} \mathbb{S}^{\lambda}V\otimes \mathbb{S}^{\lambda}W$, where $\lambda$ is a partition of $k$ giving Young tableau with at most minimum of $ \dim V,\ 
			\dim W$ rows and $\mathbb{S}$ is the Schur functor, see \cite{FH}. In our case  we will apply this formula to  $\sym^k(\Omega(1)\otimes {\cal{O}}_{L_z})$.  The only possible partition gives one row in Young tableau, and we obtain
			
			$$\sym^k(\Omega(1)\otimes {\cal{O}}_{L_z})=\sym^k(\Omega(1)) \otimes  \sym^k({\cal{O}}_{L_z})=\sym^k(\Omega(1)) \otimes  {\cal{O}}_{L_z}.$$
			
			\item[c)] Take an exact sequence of sheaves $0\to A\to B\to C\to 0.$
			Applying $\sym^k$ to this sequence, we have
			\begin{equation}\label{kernel sym}
				0\to A\otimes \sym^{k-1}B\to \sym^k B\to \sym^k C\to 0,
			\end{equation} 
			where $A\otimes \sym^{k-1}B$ means the $k$-th piece of what $A$ generates in $ \sym B$.

			Using these facts we have
			
			$\Hom(\sym^k(T_{\P^2}(-1)), {\cal{O}}_{L_z})= H^0((\sym^k(T_{\P^2}(-1))^{\vee}\otimes {\cal{O}}_{L_z} ))=H^0(\sym^k(\Omega_{\P^2}(1)\otimes {\cal{O}}_{L_z} ))$.
			Now we proceed by induction, for $k=1$	we have $H^0(\Omega_{\P^2}(1)\otimes {\cal{O}}_{L_z} )$ equal to $\C$. Assume that $H^0(\sym^j(\Omega_{\P^2}(1)\otimes {\cal{O}}_{L_z} ))=\C$ for $j<k$ take the $k$th symmetric power of the sequence
			
			\begin{equation}\label{ctg3}
				0\to {\cal{O}}_{L_z}\to \Omega_{\P^2}(1)\otimes {\cal{O}}_{L_z}\to {\cal{O}}_{L_z}(-1)\to 0,
			\end{equation}
			
			obtaining (see \ref{kernel sym})
			
			\begin{equation}\label{ctg4}
				0\to {\cal{O}}_{L_z}\otimes  \sym^{k-1}\Omega_{\P^2}(1)\to \sym^{k}(\Omega_{\P^2}(1))\otimes {\cal{O}}_{L_z}\to {\cal{O}}_{L_z}(-k)\to 0.
			\end{equation}
			
			As the global sections of ${\cal{O}}_{L_z}(-k)$ are $0$,  from the inductive assumption, we are done.
			
			\item[3)] From the above we know that all the maps from $\sym^k(T_{\P^2}(-1))$ to $\bigoplus_{z\in Z} {\cal{O}}_{L_z}$ are given by a choice of constants $(\alpha_z)_{z\in Z}$. 

			Assume now, that we chose two sets of such constants, $(\alpha_z)_{z\in Z}$ and $(\beta_z)_{z\in Z}$. Assume additionally that all the constants $\alpha_z$ and $\beta_z$ are nonzero.
			For two choices of such nonzero  constants, consider the following diagram:
			
			\[
			\begin{tikzcd}
				0 \arrow[r] & K_1 \arrow[d, " "] \arrow[r, " "] & \sym^k(T_{\P^2}(-1))\arrow[d, "="] \arrow[r, "\alpha"] & \bigoplus_{z\in Z} {\cal{O}}_{L_z} \arrow[d, "\gamma=\frac{\alpha}{\beta}"]  \\
				0 \arrow[r] & K_2\arrow[r, " "] &   \sym^k(T_{\P^2}(-1)) \arrow[r, "\beta"] & \bigoplus_{z\in Z} {\cal{O}}_{L_z} 
			\end{tikzcd}
			\]
			From this diagram (and the fact that the map $\gamma$ has an inverse, as $\alpha_z$  and $\beta_z$ are nonzero) we see that $K_1$ and $K_2$ are isomorphic.
			
			\item[4)] Take now any $z\in Z$ and the sequence
			$$0\to p_*q^*I_z(k)\to  \sym^k(T_{\P^2}(-1))\stackrel{\alpha}{\to}  {\cal{O}}_{L_z}.$$

			If $\alpha=0$, then  $p_*q^*I_z(k)\cong \sym^k(T_{\P^2}(-1))$. On the other hand, 
			from \cite[Theorem 2]{FV}, we have that $c_1(p_*q^*I_z(k))=\binom{k+1}{2}-1$,
			and we also know that $c_1(\sym^k(T_{\P^2}(-1)))=\binom{k+1}{2}$ so we get a contradiction.
			
		\end{itemize}

		To get the claim for $p_*q^*I_Z(k)$ we apply 
		$p_*q^*$  to 
		$0\to I_Z(k)\to {\cal{O}}_{\P^2}(k)\to {\cal{O}}_{Z}(k)\to 0,$
		obtaining, as in \cite{FV},
		
		$$0\to p_*q^*I_Z(k)\to \sym^k(T_{\P^2}(-1))\to \bigoplus_{z\in Z}{\cal{O}}_{l_z}.$$
		On the other hand we may also apply $\sym^k$ to the sequence:
		
		$$0\to \syz\ (J/f)(|Z|-1) \to T_{\P^2}(-1)\to (J/f)(|Z|-1)  \to 0$$
		obtaining
		
		$$0\to K \to \sym^k(T_{\P^2}(-1))\to (J/f)^k(|Z|-1) \to 0,$$
		where $K$ denotes the kernel.
		
		Thus this kernel is the bundle of syzygies of $(J/f)^k(|Z|-1) $, and $(J/f)^k(|Z|-1)  \subset \bigoplus_{z\in Z}{\cal{O}_{\ell_z}}$ (see eg. \cite{Dolgachev-Logarithmic}).

		As this kernel is unique up to isomorphism, we have
		$$K\cong p_*q^*I_Z(k),$$
		what proves the claim.
		
		To get a more specific description of this kernel,  we use 
 formula (\ref{kernel sym}), obtaining
		
		$$\widetilde{\syz}( (J/f)^{k}(|Z|-1))=\widetilde{\syz}\ (J/f)(|Z|-1)\otimes \sym^{k-1}(T_{\P^2}(-1))$$ 
		and so
		$$p_*q^*I_Z(k)=p_*q^*I_Z(1)\otimes \sym^{k-1}(T_{\P^2}(-1)).$$

		Having the claim   we proceed exactly as it is done in Lemma 3.3 of \cite{CHMN}.
		
		Let us, for the reader's convenience, go through this part of the  proof. Observe, as it is done  in \cite{CHMN}, that  $q$ restricted to the set $Y=\{(Q,\ell): Q\in L=L_P\}\subset \mathbb{F}$, where $P$ is the generic point, may be treated as a blowup of $\check{\P}^2$ in $P$. 
		
		So $q^*(I_Z(k))$ may be treated as a sheaf on $Y$ given 
		by $I_Z\otimes \O_Y(kH)$, with $H$ being a pullback of a line, so
		$ p_*(I_Z\otimes \O_Y((j+k)H-jE)=p_*(I_Z\otimes \O_Y((kH)\otimes p^*(\O_{L}(j))\cong p_*(I_Z\otimes \O_Y((kH)\otimes \O_L(j)))$ (by the projection 	formula).	
		
		On the other hand the projection $p$ maps $Y$ onto $L$ as $\P^1$ bundle. According to Theorem 2 from \cite{FV},  $p_*q^*I_Z(k)$ is a vector bundle of rank $k+1$ and as such decomposes, after restriction to $L$, as a sum of line bundles, say $\sum_{i=1}^{k+1}\O_L(-a_i)$.
		
		Thus we get that $p_*q^*I_Z(j+k)$ restricted to $L$ is
		$$\sum_{i=1}^{k+1}\O_L(j-a_i).$$
		
		Computing the appropriate dimensions  we have:
		$$ \dim[I_{Z+jP}]_{j+k}=h^0(\P^2, I_{Z+jP}\otimes {\cal{O}}_{\P^2}(j+k))=$$
		$$=h^0(\P^2, I_{Z}\otimes I_{jP}\otimes {\cal{O}}_{\P^2}(j+k))
		=h^0(Y,I_Z\otimes \O_Y((j+k)H-jE))
		$$	 
		This, from Leray spectral sequence (and using the fact that $L$ is generic, so $R^1p_*(I_Z\otimes \ell_Q), Q\in L$ vanishes, according to Theorem 2 from \cite{FV})	 
		equals
		$$ 
		h^0(L,p_*(I_Z\otimes \O_Y((j+k)H-jE))=h^0(L,p_*(I_Z(k)\otimes \O_Y((jH-jE))=$$
		
		$$=h^0(L,p_*(I_Z(k)\otimes p^*\O_L(j))),$$
		
		and from the projection formula it is
		$$=h^0(L,p_*(I_Z(k))\otimes \O_L(j) )=
		h^0(L,\oplus_i\O_L(j-a_i)).
		$$	
		
	\end{proof}
	
	Now we are in the position to prove a result describing when a curve $C_L$, constructed as in Section \ref{sec:construction} is unexpected.
	\begin{proposition}\label{pro:unexp}
		
		Let $Z$, $P$, $f$ be as above. Take  the (rank $k+1$) bundle of syzygies of $(J/f)^k(|Z|-1)$. This bundle splits on a generic line $L$ (dual to $P$), with the splitting type $(a_1,a_2,\dots a_{k+1})$.  Let us introduce the following notation:
		$$  (a_1,a_2,\dots a_{k+1})=
		(a,\dots,a,a+\epsilon_1,\dots,a+\epsilon_1,a+\epsilon_2,\dots,a+\epsilon_2,a+\epsilon_3,\dots,a+\epsilon_s)
		$$
		where $\epsilon_0=0, 1\leq\epsilon_1$ and 
		$\epsilon_i<\epsilon_{i+1}$, and $a+\epsilon_i$, for $i=0,1,\dots,s$, appears in the sequence $t_i$ times,  $t_0+\dots+t_s=k+1$. 
		Take syzygies of $(J/f)^k(|Z|-1) $, of degree $a+\epsilon_j$, for a given $j\in\{0,1,\dots, s\}$.
		The curve $C_L$ of type $(a+\epsilon_j+k, a+\epsilon_j)$  is unexpected if:

		1. $Z$ imposes independent conditions on curves of degree $a+\epsilon_j+k$ and
		
		2. $0<\sum_{i=j+1}^st_i(\epsilon_i-\epsilon_j-1).$
	\end{proposition}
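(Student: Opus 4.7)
My plan is to reduce the unexpectedness inequality directly to condition 2 by applying Lemma \ref{lem:generaliz 3.3}. Set $d := a+\epsilon_j$, so that the candidate curve $C_L$ has degree $d+k$ with a $d$-fold point at $P$. Condition 1 yields $\dim [I_Z]_{d+k} = \binom{d+k+2}{2}-|Z|$, so unexpectedness amounts to showing
\begin{equation*}
\dim[I_{Z+dP}]_{d+k} \;>\; \max\left\{0,\; \binom{d+k+2}{2}-|Z|-\binom{d+1}{2}\right\}.
\end{equation*}

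The left-hand side is computed directly from Lemma \ref{lem:generaliz 3.3}. After substituting the grouped splitting type, the quantity $d-a_i+1 = \epsilon_j-\epsilon_l+1$ is $\geq 1$ precisely when $l \leq j$ and is $\leq 0$ otherwise, so
\begin{equation*}
\dim[I_{Z+dP}]_{d+k} \;=\; \sum_{l=0}^{j} t_l(\epsilon_j-\epsilon_l+1).
\end{equation*}
In particular this sum is bounded below by $t_j \geq 1$, hence is strictly positive; this will later handle the $\max\{0,\cdot\}$ automatically.

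To recast the right-hand side into the same combinatorial shape, I invoke the first Chern class computation from Section \ref{sec:derivations}: since the rank-$(k+1)$ syzygy bundle has $c_1 = \binom{k+1}{2}-|Z|$, restriction to the generic line $L$ forces $\sum_{l=0}^{s} t_l(a+\epsilon_l) = |Z|-\binom{k+1}{2}$. Combining this with the elementary identity $\binom{d+k+2}{2}-\binom{d+1}{2} = (k+1)d+\binom{k+2}{2}$ and $\sum_l t_l = k+1$, a short algebraic manipulation recasts the expected dimension as
\begin{equation*}
\binom{d+k+2}{2} - |Z| - \binom{d+1}{2} \;=\; \sum_{l=0}^{s} t_l(\epsilon_j - \epsilon_l + 1).
\end{equation*}

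Subtracting the two expressions, the excess of actual over expected equals
\[
-\sum_{l=j+1}^{s} t_l(\epsilon_j-\epsilon_l+1) \;=\; \sum_{l=j+1}^{s} t_l(\epsilon_l-\epsilon_j-1),
\]
which is strictly positive precisely by condition 2. Together with the earlier observation that the actual dimension is at least $t_j > 0$, this delivers the strict inequality against $\max\{0,\text{expected}\}$ and hence unexpectedness. I do not anticipate a conceptual obstacle; the main task is lining up the two sums over $l=0,\dots,j$ and $l=0,\dots,s$ so that their difference isolates exactly the tail appearing in condition 2. The one subtlety is bookkeeping the Chern class relation, which is what pins down the parameter $a$ in terms of $|Z|$, the $\epsilon_l$ and the $t_l$.
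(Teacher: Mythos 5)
Your proposal is correct and follows essentially the same route as the paper: apply Lemma \ref{lem:generaliz 3.3} to get the actual dimension $\sum_{l=0}^{j} t_l(\epsilon_j-\epsilon_l+1)$, use $\sum_i a_i = |Z|-\binom{k+1}{2}$ to rewrite the expected dimension as $\sum_{l=0}^{s} t_l(\epsilon_j-\epsilon_l+1)$, and observe that the difference is exactly the tail sum of condition 2. Your explicit handling of the $\max\{0,\cdot\}$ via the lower bound $t_j\geq 1$ is a small point the paper leaves implicit, but the argument is the same.
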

	
	\begin{proof}
		From Lemma \ref{lem:generaliz 3.3} it follows, that
		$$\dim[I_Z+(a+\epsilon_j)P ]_{a+\epsilon_j+k} = \max\{0, a+\epsilon_j - a_1 + 1\} +\dots+ \max\{0, a+\epsilon_j - a_{k+1} + 1\}=$$
		$$=(\epsilon_j+1)t_0+(\epsilon_j+1-\epsilon_1)t_1+\dots+(\epsilon_j+1-\epsilon_j)t_j .$$
		On the other hand, the expected dimension is 
		$$\binom{a+\epsilon_j+k+2}{2}-|Z|-\binom{a+\epsilon_j+1}{2}.$$
		We also know that $a_1+\cdots+a_{k+1}=|Z|-\binom{k+1}{2}.$
		Thus, the expected dimension is less than the real one iff
		$$\binom{a+\epsilon_j+k+2}{2}-(\binom{k+1}{2}+(k+1)a+\sum_{i=1}^st_i\epsilon_i)-\binom{a+\epsilon_j+1}{2}<$$
		
		$$< (t_0+\dots+t_j)(\epsilon_j+1)-\sum_{i=1}^st_i\epsilon_i,$$
		what  is equivalent to
		
		$$(k+1)(\epsilon_j+1)\sum_{i=1}^st_i\epsilon_i< (t_0+\dots+t_j)(\epsilon_j+1)-\sum_{i=1}^st_i\epsilon_i.$$
		So, as $t_0+\dots+t_s=k+1$ and $\epsilon_0=0$ we have equivalently
		$$\sum_{i=j+1}^st_i(\epsilon_j+1)< \sum_{i=j+1}^st_i\epsilon_i,$$
		and thus
		$$0<\sum_{i=j+1}^st_i(\epsilon_i-\epsilon_j-1).$$
	
	\end{proof}
	
	\begin{remark}\label{re:unexp}
		It may, perhaps, happen that the  dimension of a system of curves of type $(d+k,d)$ passing once through $Z$ is equal to the expected dimension, but there is an unexpected curve of this type, with multiplicity greater than one in some points of $Z$. 
		
		In Example \ref{ex:F3-4-5} there are three linearly independent curves of type $(7,5)$ for $DF_4$ arrangement. As far as Singular \cite{Singular} can check, they are irreducible.  Moreover, one of them pass doubly through two points of $Z$, so the expected dimension count should take this under consideration.  
		
	\end{remark}

\section{Examples}
	
	This section presents some examples which were the starting point for the considerations.
	
	\begin{example}\label{ex:B3k=1}
		Here we show how the  construction 
		of the unexpected curve works in case of $B_3$ configuration and
		for $k=1$. 
		
		Take the syzygies of the jacobian of $f=abc(a^2-b^2)(a^2-c^2)(b^2-c^2)$
		given by
		$$g_0(a,b,c)= 4 a^3 - 5 a b^2 - 5 a c^2$$
		$$g_1(a,b,c) = -5 a^2 b + 4 b^3 - 5 b c^2$$
		$$g_2(a,b,c) = -5 a^2 c - 5 b^2 c + 4 c^3 $$
		so that $g_0(a,b,c)f_a(a,b,c)+g_1(a,b,c)f_b(a,b,c)+g_2(a,b,c)f_c(a,b,c)=0$.
		
		Take a generic line $L$ in the plane
		
		$$\al a+\be b+\ga c=0.$$
		
		Take then the line 
		$$L_G: g_0(a,b,c)x+g_1(a,b,c)y+g_2(a,b,c)z=0$$ 
		in the dual projective plane, and, for any  point $Q=(a,b,c)\in L$ the dual line
		$$L_Q: ax +by+cz=0.$$
		
		Assume that $c=1$.
		Compute then $$b=\frac{-\al a-\ga}{\be},$$
		substitute into the equation of $L_G$ and multiply by $\be^3$. We get:
		$$x(-5 a^3 \al^2 \be - 5 a \be^3 + 4 a^3 \be^3 - 10 a^2 \al\be\ga - 5 a\be\ga^2)+$$
		$$y(-4 a^3 \al^3 + 5 a \al \be^2 + 5 a^3 \al \be^2 - 12 a^2 \al^2 S + 5\be^2 \ga + 
		5 a^2 \be^2 \ga - 12 a \al \ga^2 - 4 \ga^3)+$$
		$$z(-4 a^3 \al^3 + 5 a \al\be^2 + 5 a^3 \al\be^2 - 12 a^2 \al^2 \ga + 5 \be^2 \ga + 
		5 a^2 \be^2 \ga - 12 a \al\ga^2 - 4 \ga^3)=0, $$
		or
		\begin{eqnarray}\label{eqn:aB3k=1}
		    \begin{aligned}
			a^3(-5 \al^2 \be x + 4 \be^3 x - 4 \al^3 y + 5 \al \be^2 y)+a^2(-10 \al\be\ga x - 12 \al^2 \ga y + 5 \be^2 \ga y - 5 \al^2 \be z - 5 \be^3 z)+\\
			+a(-5 \be^3 x - 5 \be \ga^2 x + 5 \al \be^2 y - 12 \al\ga^2 y - 10 \al\be\ga z)+5\be^2 \ga y - 4 \ga^3 y + 4 \be^3 z - 5\be\ga^2 z=0.
	    	\end{aligned}
		\end{eqnarray}
		
		Then, for any point $(x,y,z)$ different from $(\al,\be,\ga)$, we get from the equations of $L$ and $L_Q$:
		$$a=\frac{\ga y-\be z}{\be x-\al y}.$$
		
		Substituting this for $a$ in (\ref{eqn:aB3k=1})
		and multiplying by the denominator in the third power we get:
		
		\begin{eqnarray*}
			9\be^3(-\ga^3 x^3 y + \ga^3 x y^3 + \be^3 x^3 z - 3 \al \be^2 x^2 y z + 
			3 \al \ga^2 x^2 y z + 3 \al^2 \be x y^2 z\\ - 3 \be \ga^2 x y^2 z - \al^3 y^3 z - 
			3 \al^2  x y z^2 +3  \be^2 \ga x y z^2 - \be^3 x z^3 + \al^3 y z^3)=0
		\end{eqnarray*}
		
		The expression in parenthesis is the (equation of the) unexpected quartic 
		with a generic triple point described in \cite{CHMN} and in \cite{BMSS}.
	\end{example}

	\begin{example}
		\label{ex:B3k=2}
		The theory developed in Section \ref{sec:derivations} allows us to compute the module   $\syz(J^2+L)$, in case of $B_3$ configuration, which is generated by three elements
		$$[[0,0,0,0,y^2-z^2,0], \; [0,y^2,0,xy,xz,0], \;[0,0,z^2,0,xy,xz]]=(\sigma_1,\sigma_2,\sigma_3).$$
		Thus
		$$D^2_0(B_3) \otimes \O_L=\O_L(-2)\otimes \O_L(-2) \otimes \O_L(-2).$$
		If we take as a general line, the line with equation $L=-12x+10y+7z$ and syzygy $\sigma_2$, then Theorem \ref{thm:main} gives the equation of curve to be
		$$49x^3y-49xy^3+168x^2yz+140xy^2z+44xyz^2=0.$$
	\end{example}
	
	\begin{example}
		\label{ex:F3-4-5}
		Let $e$ be the $n$-th primitive root from unity. Denote by $DF_n=xyz\prod_{i,j=0}^{n-1}(x+e^iy+e^jz)$  configuration of lines dual to the points cut by the so-called Fermat configuration of lines $(x^n-y^n)(x^n-z^n)(y^n-z^n)$. Fermat configurations contains exactly $n^2+3$ points and we set $Z$ to be the set of those points. The following tables give the exponents $a_i$ in the splitting type, i.e.
		$$D^k_0(DF_n) \otimes \O_L=\O_L(-a_{1})\otimes \cdots \otimes \O_L(-a_{k+1}),$$
		for $n=3,4,5$ and all $k$ which fulfil inequality $n^2+3>\binom{k+1}{2}$, together with the values of $\epsilon_i$ and $t_i$ described in Proposition \ref{pro:unexp}. The last column contains all unexpected curves of type $(d+k,d)$ which can be obtained by this proposition. For the readers convenience we adopt here the convention that we give an exact number of values of $\epsilon_i$ and $t_i$. Therefore, if some values do not exist, we put $0$ instead of omitting.
		
		\vspace{0.5cm}
		\begin{minipage}{0.97\textwidth}
			\centering
			$n=3$	\begin{tabular}[H]{c|c|c|c|c}
				$k$ & $a_1,\ldots,a_{k+1} $ & $\epsilon_1$ & $t_0,t_1$ & $(d+k,d)$\\
				\hline
				\rule{0pt}{2ex} 
				1& 4,7 & 3 & 1,1 & (5,4)\\
				\rule{0pt}{2ex} 
				2& 3,3,3 & 0 & 0,0 & ---\\
				\rule{0pt}{2ex} 
				3& 1,1,2,2 & 1 & 2,2 & ---\\
				\rule{0pt}{2ex} 
				4& 0,0,0,1,1 & 1 & 3,2 & ---\\
				\rule{0pt}{2ex} 
				5& 0,0,0,0,0,1 & 1 & 5,1 & ---\\
			\end{tabular}
		\end{minipage}
		
		\vspace{0.5cm}
		\begin{minipage}{0.97\textwidth}
			\centering
			$n=4$		\begin{tabular}[H]{c|c|c|c|c}
				$k$ & $a_1,\ldots,a_{k+1} $ & $\epsilon_1,\epsilon_2$ & $t_0,t_1,t_2$ & $(d+k,d)$\\
				\hline
				\rule{0pt}{2ex} 
				1& 9,9 & --- & --- & ---\\
				\rule{0pt}{2ex} 
				2& 4,5,7 & 1,3 & 1,1,1 & (6,4),(7,5)\\
				\rule{0pt}{2ex} 
				3& 3,3,3,4 & 1,0 & 3,1,0 & ---\\
				\rule{0pt}{2ex} 
				4& 1,1,2,2,3 & 1,2 & 2,2,1 & ---\\
				\rule{0pt}{2ex} 
				5 & 0,0,0,1,1,2 & 1,2 & 3,2,1 & ---\\
				\rule{0pt}{2ex}
				6 & 0,0,0,0,0,0,1 & 1,0 & 6,1,0 & ---\\
			\end{tabular}
		\end{minipage}
		
		\vspace{0.5cm}
		\begin{minipage}{0.97\textwidth}
			\centering
			$n=5$		\begin{tabular}[H]{c|c|c|c|c}
				$k$ & $a_1,\ldots,a_{k+1} $ & $\epsilon_1,\epsilon_2,\epsilon_3$ & $t_0,t_1,t_2,t_3$ & $(d+k,d)$\\
				\hline
				\rule{0pt}{2ex} 
				1& 13,14 & 1,0,0 & 1,1,0,0 & ---\\
				\rule{0pt}{2ex} 
				2& 7,9,9 & 2,0,0 & 1,2,0,0 & (9,7)\\
				\rule{0pt}{2ex} 
				3& 4,5,6,7 & 1,2,3 & 1,1,1,1 & (7,4)$^*$,(8,5)\\
				\rule{0pt}{2ex} 
				4& 3,3,3,4,5 & 1,2,0 & 3,1,1,0 & (7,3)$^*$\\
				\rule{0pt}{2ex} 
				5 & 1,1,2,2,3,4 & 1,2,3 & 2,2,1,1 & (6,1)$^*$, (7,2)$^*$\\
				\rule{0pt}{2ex}
				6 & 0,0,0,1,1,2,3 & 1,2,3 & 3,2,1,1 & ---\\
				\rule{0pt}{2ex}
				7 & 0,0,0,0,0,0,0,2 & 2,0,0 & 7,1,0,0 & ---\\
				\rule{0pt}{2ex}
				8 & 0,0,0,0,0,0,0,0,1 & 1,0,0 & 8,1,0,0 & ---\\
			\end{tabular}
			\\
			\vspace{2mm}
			{\tiny		
				$^*$ means a case when the conditions imposed by $Z$ are dependent.}		
		\end{minipage}
		
		\vspace{0.5cm}
		Some interesting examples can be found among all given cases. Consider for instance the curve $(7,5)$ for $n=4$. As is computed, the curve constructed by Theorem \ref{thm:main} for this case has an unusual property. Namely, the curve passes through
		all points of the set $Z$ once, except points $(0,1,0)$ and $(0,0,1)$, which are double. The equation of this curve, where the general point has coordinates $(a,b,c)$, is
		
		\begin{scriptsize}
			\begin{multline*}
				\mathcal{C}_{4,7,5}=(5b^4c+3c^5)x^6y+(-20ab^3c)x^5y^2+(30a^2b^2c)x^4y^3+(-20a^3bc)x^3y^4+(5a^4c-3c^5)x^2y^5+(-3b^5-5bc^4)x^6z+\\
				(10ab^4-10ac^4)x^5yz+(-10a^2b^3)x^4y^2z+(5a^4b+5bc^4)x^2y^4z+(-2a^5+10ac^4)xy^5z+(20abc^3)x^5z^2+(10a^2c^3)x^4yz^2+(-20abc^3)xy^4z^2+\\
				(-10a^2c^3)y^5z^2+(-30a^2bc^2)x^4z^3+(30a^2bc^2)y^4z^3+(20a^3bc)x^3z^4+(-5a^4c-5b^4c)x^2yz^4+(20ab^3c)xy^2z^4+(-30a^2b^2c)y^3z^4\\
				+(-5a^4b+3b^5)x^2z^5+(2a^5-10ab^4)xyz^5+(10a^2b^3)y^2z^5=0.
			\end{multline*}
		\end{scriptsize}
		On the other hand, if we consider the system $\mathcal{L}$ of all curves which pass once through all points dual to $DF_4$ and which pass through a general point with multiplicity $5$, then we can compute that $\dim [\mathcal{L}]_7=3$, while the expected dimension is
		$$\binom{9}{2} -|Z| - \binom{6}{2} = 36-19-15=2.$$
		Therefore, there exists an unexpected curve of type $(7,5)$ different from what we got previously from Theorem \ref{thm:main}. By using computer algebra software it can be computed that the equation of such a curve is
		
		\begin{scriptsize}
			\begin{multline*}
				\mathcal{C'}_{4,7,5}=(50ab^6c^2+90ab^2c^6)x^4y^3+(-150a^2b^5c^2-90a^2bc^6)x^3y^4+(150a^3b^4c^2+30a^3c^6)x^2y^5+(-50a^4b^3c^2-30b^3c^6)xy^6+\\
				(-30b^8c-60b^4c^5-6c^9)x^5yz+(50ab^7c-110ab^3c^5)x^4y^2z+(60a^3bc^5)x^2y^4z+(-50a^4b^4c-30a^4c^5+90b^4c^5+6c^9)xy^5z+\\
				(30a^5b^3c+50ab^3c^5)y^6z+(15b^9+66b^5c^4+15bc^8)x^5z^2+(-25ab^8+190ab^4c^4+15ac^8)x^4yz^2+(25a^4b^5-15a^4bc^4-75b^5c^4-15bc^8)xy^4z^2+\\
				(-15a^5b^4+9a^5c^4-175ab^4c^4-15ac^8)y^5z^2+(-200ab^5c^3-60abc^7)x^4z^3+(200ab^5c^3+60abc^7)y^4z^3+(150a^2b^5c^2+90a^2bc^6)x^3z^4+\\
				(-150a^3b^4c^2-30a^3c^6)x^2yz^4+(50a^4b^3c^2+30b^3c^6)xy^2z^4+(-50ab^6c^2-90ab^2c^6)y^3z^4+(-60a^3bc^5)x^2z^5+\\
				(50a^4b^4c+30a^4c^5+30b^8c-30b^4c^5)xyz^5+(-30a^5b^3c-50ab^7c+60ab^3c^5)y^2z^5+
				(-25a^4b^5+15a^4bc^4-15b^9+9b^5c^4)xz^6+\\
				(15a^5b^4-9a^5c^4+25ab^8-15ab^4c^4)yz^6=0,
			\end{multline*}
		\end{scriptsize}
		where the general point has coordinates $(a,b,c)$.
		
		This example suggests that perhaps not all unexpected curves of type $(d+k,d)$ can be derived from syzygies or different construction should be used.
		
		A similar situation takes place for $n=5$ and the curve of type $(8,5)$. The construction of Theorem \ref{thm:main} leads to a curve with $2$ double points among the set of points dual to $DF_5$. The explicit equation of this curve with general point $(a,b,c)$ is
		
		\begin{scriptsize}
			\begin{multline*}
				\mathcal{C}_{5,8,5}=(3b^5c+2c^6)x^7y+(-10ab^4c)x^6y^2+(10a^2b^3c)x^5y^3+(-5a^4bc)x^3y^5+(2a^5c-2c^6)x^2y^6+(-2b^6-3bc^5)x^7z+(6ab^5-6ac^5)x^6yz+\\
				(-5a^2b^4)x^5y^2z+(2a^5b+3bc^5)x^2y^5z+(-a^6+6ac^5)xy^6z+(10abc^4)x^6z^2+(5a^2c^4)x^5yz^2+(-10abc^4)xy^5z^2+(-5a^2c^4)y^6z^2+\\
				(-10a^2bc^3)x^5z^3+(10a^2bc^3)y^5z^3+(5a^4bc)x^3z^5+(-2a^5c-3b^5c)x^2yz^5+(10ab^4c)xy^2z^5+(-10a^2b^3c)y^3z^5+(-2a^5b+2b^6)x^2z^6+\\
				(a^6-6ab^5)xyz^6+(5a^2b^4)y^2z^6=0,
			\end{multline*}
		\end{scriptsize}
		whereas we can again find a curve passing simply through $Z$ and through general point with multiplicity $5$, and such a curve has a different equation.
		
	\end{example}
	\subsection*{Acknowledgements.}
	We would like to warmly thank Jakub Byszewski, Marcin Dumnicki, Brian Harbourne, Piotr Pokora, Tomasz Szemberg and Jerzy Weyman for help and discussions.
	
	The first author was partially supported by National Science Center (Poland) Sonata Grant Nr
	2018/31/D/ST1/00177. The second author was partially supported by 
	National Science Center grant Opus 2019/35/B/ST1/00723.

	\footnotesize
	\noindent
	Grzegorz Malara: Department of Mathematics, Pedagogical University of Cracow, 	Podchor\c a\.zych 2, 30-084 Krak\'ow, Poland, grzegorz.malara$@$up.krakow.pl, grzegorzmalara@gmail.com 
	\\
	Halszka Tutaj-Gasi\'nska: Faculty of Mathematics and Computer Science, Jagiellonian University, Stanis{\l}awa {\L}ojasiewicza 6, 30-348 Kraków, Poland,
	halszka.tutajgasinska@gmail.com
	
\end{document}